\documentclass[a4paper, 12pt]{amsart}
\usepackage[a4paper,hmargin=2.5cm,vmargin=2.5cm]{geometry}
\sloppy
\usepackage[T2A]{fontenc}
\usepackage[utf8]{inputenc}

\usepackage[russian,english]{babel}
\usepackage{amsmath,amssymb,amsthm,amsfonts,mathrsfs,amscd}
\usepackage[linkcolor=blue,citecolor=blue,colorlinks=true,urlcolor=blue]{hyperref}
\usepackage{graphics}


\numberwithin{equation}{section}
\newtheorem{theorem}{Theorem}[section]
\newtheorem{lemma}[theorem]{Lemma}
\newtheorem{propos}[theorem]{Proposition}
\theoremstyle{definition}
\newtheorem{definition}[theorem]{Definition}

\newtheorem{proof}{Proof}
\let\origendproof\endproof
\def\endproof{\unskip\nobreak\hskip5pt plus 1fill$\square$\origendproof}

\newtheorem{sled}[theorem]{Corollary}
\newtheorem{ex}[theorem]{Example}
\newtheorem{rem}[theorem]{Remark}
\newtheorem{notation}[theorem]{Notation}

\def\Im{\operatorname{Im}}

\def\sep{\mathrm{sep}}
\def\Gal{\mathop{\rm Gal}\nolimits}

\pagestyle{plain}
\footskip=25pt

\begin{document}
\title{Forms of del Pezzo surfaces of degree 5 and 6}
\author{A.\,V.~Zaitsev}
\address{National research university ''Higher school of economics''}
\email{\href{alvlzaitsev1@gmail.com}{alvlzaitsev1@gmail.com}}

\maketitle

\begin{abstract}
    In this paper we obtain necessary and sufficient condition for existence of del Pezzo surfaces of degree $5$ and $6$ over a field $K$ with a prescribed action of absolute Galois group $\text{Gal} ( K^{\sep}/K)$ on the graph of $(-1)$-curves. Also we compute automorphism groups of del Pezzo surfaces of degree $5$ over arbitrary fields. 
\end{abstract}

\tableofcontents
\section{Introduction}\label{introduction}

Del Pezzo surface is a smooth projective surface $X$ with ample anticanonical class. Automorphism groups of del Pezzo surfaces over algebraically closed fields of characteristic zero were completely described in the paper of I.V. Dolgachev and V.A. Iskovskikh \cite{DI1}, when they studied finite subgroups in birational automorphism group of projective plane. There are only partial results about del Pezzo surfaces automorphisms over an arbitrary field (see \cite{DI2, Y}, also see \cite{Serre}). The ideal result would be the complete description of automorphism groups of del Pezzo surfaces over a prescribed field, analogous to one, obtained in dimension one in \cite{Beauv} and~\cite{G-A}. Although, even the description of groups acting minimally on del Pezzo surfaces will be useful due to possibility of it's application to classification of finite subgroups of the group of birational automorphisms of projective plane over arbitrary fields.

The self-intersection index of the canonical class $K_X$ is called the degree of a del Pezzo surface. This index can take integer values from 1 to 9. In this paper we study del Pezzo surfaces of degrees $5$ and $6$ over various fields.

In the case of an algebraically (or separably) closed field, a del Pezzo surface of degree $d$ is either isomorphic to $\mathbb{P}^1\times\mathbb{P}^1$, or is obtained by blowing up the projective plane at $9 - d$ points in general position. In particular, for $d = 5, 6$, these surfaces are unique up to isomorphism. In the case of an arbitrary field $K$, an additional invariant appears, namely, an action of the Galois group $\text{Gal} ( K^{\sep}/K )$ on the graph of $(-1)$-curves. It is this invariant will be studied in the paper.



Consider an arbitrary field $K$. Let $X$ be a del Pezzo surface of degree $5$ over~$K$. The Galois group $\text{Gal} ( K^{\sep}/K )$ acts on the graph of $(-1)$-curves by automorphisms. The automorphism group of this graph is isomorphic to the symmetric group $\mathfrak{S}_5$, see~\hbox{\cite[§8.5.4]{D}}, so that we get the homomorphism $$h\colon\text{Gal}(K^{\sep}/K) \xrightarrow[]{}\mathfrak{S}_5.$$ The same homomorphism corresponds to the action of the group $\text{Gal} (K^{\sep}/K)$ on five conic bundle structures on $X$.

Given a subgroup $H \subset \mathfrak{S}_5$, we denote its conjugacy class by $[H]$. It is clear that the isomorphism class of subgroup does not determine its conjugacy class, for example, the subgroup  $\langle (12) \rangle$ is not conjugate to $\langle (12)(34) \rangle$ (for clarity, all conjugacy classes are written out at the beginning of Section~\ref{preliminaries}).

\begin{definition}\label{def:type5}
We say that a del Pezzo surface of degree $5$ has type $[H]$ if  $\Im{h} \in [H]$.
\end{definition}

Note that the isomorphism class of a del Pezzo surface of degree $5$ is not determined by its type in general, see Remark~\ref{rem:remark1}. The classification of del Pezzo surfaces of degree $5$ up to isomorphism over fields of characteristic zero is described in~\cite[Theorem~3.1.3]{Skoro}; see also~\cite[Proposition 4.7(iv)]{SLZ}.

In this paper we will prove the following theorem.

\begin{theorem}\label{theo:theo}
There exists a del Pezzo surface of degree $5$ of type $[H]$ over $K$ if and only if there exists a Galois extension of fields $L\supset K$ with Galois group isomorphic to $H$.
\end{theorem}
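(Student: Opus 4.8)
The plan is to prove the two directions of the equivalence separately, with the "only if" direction being essentially immediate and the "if" direction requiring a construction.

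For the "only if" direction, suppose $X$ is a del Pezzo surface of degree $5$ of type $[H]$ over $K$. By definition, $\Im h$ lies in the conjugacy class $[H]$, so up to conjugation $\Im h = H$. The homomorphism $h\colon \Gal(K^{\sep}/K) \to \mathfrak{S}_5$ is continuous, hence its kernel is an open normal subgroup corresponding to a finite Galois extension $L \supset K$. By construction $\Gal(L/K) \cong \Gal(K^{\sep}/K)/\ker h \cong \Im h = H$, which is exactly the Galois extension we need. (One should note that the $(-1)$-curves, and hence the graph on which $\Gal(K^{\sep}/K)$ acts, are all defined over $K^{\sep}$, so $h$ indeed factors through a finite quotient.)

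For the "if" direction, suppose we are given a Galois extension $L \supset K$ with $\Gal(L/K) \cong H$. I would fix the unique del Pezzo surface $X_0$ of degree $5$ over $K^{\sep}$ (equivalently over $\overline{K}$), together with a fixed identification of the automorphism group of its graph of $(-1)$-curves with $\mathfrak{S}_5$. Choosing an embedding $H \hookrightarrow \mathfrak{S}_5$ in the prescribed conjugacy class, I would then produce a $K$-form of $X_0$ realizing the composite action $\Gal(K^{\sep}/K) \twoheadrightarrow \Gal(L/K) \cong H \hookrightarrow \mathfrak{S}_5$ on the graph. The natural tool is Galois descent: forms of $X_0$ over $K$ are classified by the Galois cohomology set $H^1(\Gal(K^{\sep}/K), \operatorname{Aut}(X_{0,K^{\sep}}))$, and one must exhibit a cocycle whose induced action on the graph of $(-1)$-curves is the desired homomorphism. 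Since a del Pezzo surface of degree $5$ over a separably closed field has automorphism group exactly $\mathfrak{S}_5$ acting faithfully on the graph (this is classical, and is part of what the paper develops), the map $\operatorname{Aut}(X_{0,K^{\sep}}) \to \Sym(\text{graph})$ is an isomorphism, so the prescribed homomorphism $\Gal(K^{\sep}/K) \to \mathfrak{S}_5$ lifts canonically to a homomorphism into $\operatorname{Aut}(X_{0,K^{\sep}})$, which (as the Galois action on $\operatorname{Aut}(X_{0,K^{\sep}}) \cong \mathfrak{S}_5$ is trivial, these being all defined over the prime field) is automatically a $1$-cocycle. Twisting $X_0$ by this cocycle yields a del Pezzo surface $X$ over $K$, and by construction its type is $[H]$.

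The main obstacle, and the step deserving the most care, is verifying that the twisted variety $X$ obtained from the descent datum is actually a del Pezzo surface \emph{defined over $K$} — that is, that the cocycle is effective — and that its graph of $(-1)$-curves carries precisely the homomorphism $h$ we started with rather than some other representative of its conjugacy class. Effectiveness follows because $X_0$ is quasi-projective and the twisting is by a cocycle with values in the linear algebraic group $\operatorname{Aut}(X_{0,K^{\sep}}) \cong \mathfrak{S}_5$ (a finite constant group scheme), so standard descent for quasi-projective varieties applies and the quotient exists as a variety over $K$; ampleness of the anticanonical class and the degree are preserved since $-K_X$ pulls back to $-K_{X_0}$ after base change. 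Identifying the resulting $h$ with the prescribed one is then a matter of unwinding how the Galois action on $(-1)$-curves of the twist is computed from the cocycle; one finds it is exactly the given homomorphism into $\mathfrak{S}_5$, so the conjugacy class is $[H]$ on the nose.
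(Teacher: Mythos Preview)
Your proposal is correct. The ``only if'' direction matches the paper's Lemma~\ref{lemma:nec2}. For the ``if'' direction, however, you take a genuinely different route from the paper. The paper proceeds by an explicit geometric construction: it produces five $K^{\sep}$-points on a smooth rational conic $C\subset\mathbb{P}^2_K$ on which $\Gal(K^{\sep}/K)$ acts through the prescribed subgroup $H\subset\mathfrak{S}_5$ (this is the content of Lemma~\ref{lemma:roots}, Lemma~\ref{lemma:g(b_i)} and Corollary~\ref{sled:P2}), then blows up those points and contracts the strict transform of $C$ as in Section~\ref{mainc}. This works directly when $|K|>c(H)$; over very small finite fields the paper falls back on the ad hoc blow-up constructions of Examples~\ref{ex:1}--\ref{ex:5}.

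Your cohomological argument is more uniform and avoids that case analysis: fix the split form $X_0$ of type $[e]$ over $K$ (blow up $\mathbb{P}^2$ at the four standard coordinate points, defined already over the prime field, cf.\ Example~\ref{ex:1}), observe that $\operatorname{Aut}(X_{0,K^{\sep}})\cong\mathfrak{S}_5$ carries trivial Galois action, so that $1$-cocycles are just homomorphisms $\Gal(K^{\sep}/K)\to\mathfrak{S}_5$, and twist. One imprecision worth fixing: you should set up $X_0$ as a variety over $K$ (indeed over the prime field) rather than over $K^{\sep}$, since the twisting formalism classifies $K$-forms of a fixed $K$-variety; with that adjustment the effectiveness of descent via the ample anticanonical class goes through exactly as you say. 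What the paper's approach buys in exchange for the extra case analysis is explicitness---one gets concrete birational models over $K$---and it is deliberately chosen to stay ``more elementary and more geometric'' than the descent formalism implicit in \cite[Theorem~3.1.3]{Skoro}, as the authors note after Corollary~\ref{sled:F_q}.
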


For some important types of fields, Theorem~\ref{theo:theo} gives a simple necessary and sufficient condition for existence of a del Pezzo surface of degree $5$ of a given type.

\begin{sled}\label{sled:Q()}
Let $\mathbb{F}$ be a number field. Then there exist del Pezzo surfaces of degree $5$ of all types over $\mathbb{F}$.
\end{sled}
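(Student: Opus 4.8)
The strategy is to invoke Theorem~\ref{theo:theo} and reduce the existence of a del Pezzo surface of degree $5$ of type $[H]$ over a number field $\mathbb{F}$ to the existence of a Galois extension $L \supset \mathbb{F}$ with $\Gal(L/\mathbb{F}) \cong H$. Since $H$ ranges over subgroups of $\mathfrak{S}_5$, it suffices to realize every subgroup of $\mathfrak{S}_5$ as a Galois group over $\mathbb{F}$. First I would list, up to isomorphism, the groups that occur as subgroups of $\mathfrak{S}_5$: the trivial group, $\mathbb{Z}/2$, $\mathbb{Z}/3$, $\mathbb{Z}/4$, $\mathbb{Z}/5$, $\mathbb{Z}/6$, the Klein four-group $(\mathbb{Z}/2)^2$, $\mathfrak{S}_3$, $D_4$ (dihedral of order $8$), $D_5$ (dihedral of order $10$), $\mathfrak{A}_4$, $\mathfrak{S}_4$, $F_{20}$ (the Frobenius group of order $20$), $\mathfrak{A}_5$, and $\mathfrak{S}_5$ itself.

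The key input is Shafarevich's theorem that every solvable group is realizable as a Galois group over any number field (indeed over $\mathbb{Q}$), which covers all of the above except $\mathfrak{A}_5$ and $\mathfrak{S}_5$. For those two, $\mathfrak{S}_5$ is realized over $\mathbb{Q}$ by a generic quintic (e.g.\ the splitting field of $x^5 - x - 1$ has Galois group $\mathfrak{S}_5$), and $\mathfrak{A}_5$ is realized over $\mathbb{Q}$ by a classical construction (for instance the splitting field of $x^5 + 20x + 16$). Over a general number field $\mathbb{F}$ one can still produce such extensions: a Hilbertian-field argument (every number field is Hilbertian) shows that specializing the generic polynomial with Galois group $\mathfrak{S}_5$, respectively $\mathfrak{A}_5$, over $\mathbb{F}(t)$ yields infinitely many specializations retaining the full group, so both groups are Galois over $\mathbb{F}$. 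Combining Shafarevich's theorem with these two cases, every subgroup $H \subset \mathfrak{S}_5$ arises as $\Gal(L/\mathbb{F})$ for some $L$, and Theorem~\ref{theo:theo} then furnishes a del Pezzo surface of degree $5$ of type $[H]$ over $\mathbb{F}$.

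The only mild subtlety, and the step I would be most careful about, is passing from realizability over $\mathbb{Q}$ to realizability over an arbitrary number field $\mathbb{F}$: Shafarevich's theorem is usually quoted over $\mathbb{Q}$, but it holds over any global field, and for the two nonsolvable groups the Hilbertianity of $\mathbb{F}$ (Hilbert irreducibility holds for all number fields) bridges the gap. Once this is in place the corollary is immediate; no geometry beyond Theorem~\ref{theo:theo} is needed.
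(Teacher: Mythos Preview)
Your proposal is correct and follows essentially the same route as the paper: reduce via Theorem~\ref{theo:theo} to realizing each subgroup of $\mathfrak{S}_5$ as a Galois group over the number field, invoke Shafarevich's theorem for the solvable subgroups, and handle $\mathfrak{A}_5$ and $\mathfrak{S}_5$ separately. The only cosmetic difference is that the paper dispatches the two nonsolvable cases by citing~\cite[§1]{NV}, whereas you spell out the Hilbert irreducibility argument; either justification works.
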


\begin{sled}\label{sled:F_q}
Let $\mathbb{F}$ be a finite field. Then there exists a del Pezzo surface of degree $5$ of type $[H]$ over $\mathbb{F}$ if and only if $H$ is a cyclic group.
\end{sled}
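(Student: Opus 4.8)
The plan is to deduce this statement directly from Theorem~\ref{theo:theo} together with the classical description of finite extensions of finite fields. By Theorem~\ref{theo:theo}, a del Pezzo surface of degree $5$ of type $[H]$ exists over $\mathbb{F}$ if and only if there is a Galois extension $L \supset \mathbb{F}$ with $\Gal(L/\mathbb{F}) \cong H$. Since $H$ is a finite subgroup of $\mathfrak{S}_5$, such an $L$ is automatically a finite extension of $\mathbb{F}$ (indeed $[L:\mathbb{F}] = |H|$), hence itself a finite field.

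First I would recall that if $\mathbb{F}$ has $q$ elements, then for every integer $n \geq 1$ there is, up to isomorphism, a unique field $\mathbb{F}_{q^n}$ with $q^n$ elements containing $\mathbb{F}$, that $\mathbb{F}_{q^n}/\mathbb{F}$ is Galois, and that $\Gal(\mathbb{F}_{q^n}/\mathbb{F})$ is cyclic of order $n$, generated by the Frobenius $x \mapsto x^q$; moreover every finite extension of $\mathbb{F}$ is of this form. Consequently the groups occurring as $\Gal(L/\mathbb{F})$ for a finite Galois extension $L \supset \mathbb{F}$ are exactly the finite cyclic groups (equivalently, $\Gal(K^{\sep}/\mathbb{F}) \cong \widehat{\mathbb{Z}}$ is procyclic, so its finite continuous quotients are precisely the finite cyclic groups).

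Combining these two observations gives the claim: a del Pezzo surface of degree $5$ of type $[H]$ exists over $\mathbb{F}$ if and only if $H$ is isomorphic to a finite cyclic group, i.e. if and only if $H$ is cyclic. For the positive direction one notes that the cyclic subgroups of $\mathfrak{S}_5$ have orders $1,2,3,4,5,6$, and each order $n$ is realized by the extension $\mathbb{F}_{q^n}/\mathbb{F}$; for the negative direction, any non-cyclic subgroup $H \subset \mathfrak{S}_5$ (for instance $\langle(12),(34)\rangle \cong (\mathbb{Z}/2\mathbb{Z})^2$, $\mathfrak{S}_3$, $D_8$, $D_{10}$, $A_4$, $\mathfrak{S}_4$, $A_5$, or $\mathfrak{S}_5$ itself) cannot be a quotient of $\widehat{\mathbb{Z}}$, hence cannot occur as $\Gal(L/\mathbb{F})$.

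There is essentially no serious obstacle here beyond invoking Theorem~\ref{theo:theo}; the only point requiring a line of care is the remark already made, that the extension $L$ produced (or required) by the theorem is finite over $\mathbb{F}$ because $H$ is finite and $L/\mathbb{F}$ is Galois, so that the structure theory of finite fields applies verbatim.
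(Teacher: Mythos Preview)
Your proof is correct and is exactly the argument the paper has in mind: the paper does not write out a separate proof of this corollary, but it is intended as an immediate consequence of Theorem~\ref{theo:theo} (which is established for all fields, the finite-field case being handled in Section~\ref{finite field}) together with the standard fact that every finite Galois extension of a finite field has cyclic Galois group, and conversely every finite cyclic group arises this way. Your additional remarks about Frobenius and $\widehat{\mathbb{Z}}$ are fine elaborations but not essential.
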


Note that for fields of characteristic zero Theorem~\ref{theo:theo} can be derived from the more accurate result~\mbox{\cite[Theorem~3.1.3]{Skoro}}. We prove it in a different way using the more elementary and more geometric approach that works for arbitrary fields, in particular, imperfect ones. In addition, the advantage of a rougher classification, which is given by Theorem~\ref{theo:theo}, is that it is convenient to describe automorphism groups of del Pezzo surfaces of degree $5$ in such terms.

The second main result of this paper is the classification of automorphism groups of del Pezzo surfaces of degree $5$ depending on their type. Recall that over an algebraically closed field, the automorphism group of the del Pezzo surface of degree $5$ is isomorphic to the group $\mathfrak{S}_5$.

\begin{theorem}\label{theo:autgrp}
The automorphism group of a del Pezzo surface of degree $5$ of type $[H]$ is isomorphic to the centralizer of the subgroup $H$ in the group $\mathfrak{S}_5$.
\end{theorem}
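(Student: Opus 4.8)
My plan is to identify $\operatorname{Aut}(X)$ with the subgroup of $\mathfrak{S}_5$ consisting of those graph automorphisms that commute with the Galois action. Over $K^{\sep}$ we have the canonical isomorphism $\operatorname{Aut}(X_{K^{\sep}}) \cong \mathfrak{S}_5$, realized by the action on the graph of $(-1)$-curves (equivalently on the five conic bundle structures); this uses that a del Pezzo surface of degree $5$ over a separably closed field is unique and that its automorphisms are determined by their action on the $(-1)$-curves, since the surface is recovered as a blow-up of $\mathbb{P}^2$ in four points and an automorphism fixing the whole configuration pointwise must be trivial. An automorphism of $X$ defined over $K$ is exactly an element of $\operatorname{Aut}(X_{K^{\sep}})$ that is fixed by the natural Galois action on $\operatorname{Aut}(X_{K^{\sep}})$; and since the identification with $\mathfrak{S}_5$ is Galois-equivariant for the conjugation action of $\operatorname{Im}(h) = \Gamma$ on $\mathfrak{S}_5$, this set of fixed points is precisely the centralizer $Z_{\mathfrak{S}_5}(\Gamma)$. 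Finally, $Z_{\mathfrak{S}_5}(\Gamma)$ depends only on the conjugacy class $[H] = [\Gamma]$, and for any choice of representative equals (up to isomorphism, indeed up to conjugacy in $\mathfrak{S}_5$) the centralizer $Z_{\mathfrak{S}_5}(H)$.

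The steps I would carry out, in order, are: (1) record the isomorphism $\operatorname{Aut}(X_{K^{\sep}}) \xrightarrow{\sim} \mathfrak{S}_5$ and verify it is canonical, hence Galois-equivariant, where $\operatorname{Gal}(K^{\sep}/K)$ acts on $\operatorname{Aut}(X_{K^{\sep}})$ via its action on $K^{\sep}$-points of $X$ and on $\mathfrak{S}_5 = \operatorname{Aut}(\text{graph})$ through the homomorphism $h$ followed by conjugation; (2) invoke Galois descent for automorphisms: $\operatorname{Aut}(X)=\operatorname{Aut}(X_{K^{\sep}})^{\operatorname{Gal}(K^{\sep}/K)}$, which holds because $X$ is a quasi-projective variety and $\operatorname{Aut}$ is a sheaf for the fppf (in particular Galois) topology; (3) compute the fixed subgroup $\mathfrak{S}_5^{\operatorname{Gal}}$ under this conjugation action as $\{g \in \mathfrak{S}_5 : g\,h(\sigma)\,g^{-1} = h(\sigma)\ \forall \sigma\} = Z_{\mathfrak{S}_5}(\operatorname{Im} h)$; (4) observe that $\operatorname{Im} h \in [H]$ by Definition~\ref{def:type5}, and that conjugate subgroups have conjugate (in particular isomorphic) centralizers, so $\operatorname{Aut}(X) \cong Z_{\mathfrak{S}_5}(H)$.

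The main obstacle, and the only point requiring genuine care, is step (1): establishing that the comparison map $\operatorname{Aut}(X_{K^{\sep}}) \to \mathfrak{S}_5$ is an isomorphism and that it intertwines the Galois action on automorphisms with conjugation by $\operatorname{Im}(h)$ on $\mathfrak{S}_5$. Injectivity amounts to showing an automorphism acting trivially on the graph of $(-1)$-curves is the identity — one can see this by fixing four of the $(-1)$-curves, contracting them to realize $X_{K^{\sep}}$ as $\mathbb{P}^2$ blown up at four points in general position, and noting that an automorphism preserving each exceptional curve and the proper transforms of the connecting lines descends to an automorphism of $\mathbb{P}^2$ fixing four points in general position, hence is trivial. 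Surjectivity is the classical fact that all $120$ permutations are realized, which may be cited from \cite{D} or \cite{DI1}. Equivariance is then essentially formal: for $\sigma \in \operatorname{Gal}(K^{\sep}/K)$ and $\varphi \in \operatorname{Aut}(X_{K^{\sep}})$, the conjugate ${}^{\sigma}\varphi$ acts on a $(-1)$-curve $E$ by ${}^\sigma\varphi(E) = \sigma\bigl(\varphi(\sigma^{-1}E)\bigr)$, which unwinds to $h(\sigma)$-conjugation of the permutation attached to $\varphi$; I would spell this out carefully since it is the linchpin of the argument. Everything else is bookkeeping with centralizers in $\mathfrak{S}_5$.
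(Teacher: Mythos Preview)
Your proposal is correct and follows essentially the same approach as the paper: both arguments identify $\operatorname{Aut}(X)$ with $\operatorname{Aut}(X_{K^{\sep}})^{\Gal(K^{\sep}/K)}$, use faithfulness of the action on $(-1)$-curves (the paper's Lemma~\ref{lemma:freeaction}) for injectivity into $\mathfrak{S}_5$, and then compute the Galois-fixed elements as the centralizer of $\Im h$. The only cosmetic difference is packaging: you phrase the key step as ``the isomorphism $\operatorname{Aut}(X_{K^{\sep}})\cong\mathfrak{S}_5$ is Galois-equivariant for the conjugation action through $h$,'' whereas the paper verifies directly that for $\sigma\in C_{\mathfrak{S}_5}(H)$ the lift $f\in\operatorname{Aut}(X_{K^{\sep}})$ satisfies $\gamma(f)=f$ by comparing their actions on the $(-1)$-curves---but this is exactly your equivariance computation unwound.
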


An explicit list of automorphism groups of del Pezzo surfaces of degree $5$ can be found in~\ref{aut groups}. Also note that Theorem~\ref{theo:autgrp} allows us to obtain for each subgroup $G\subset\mathfrak{S}_5$ a criterion for the existence of a $G$-minimal del Pezzo surface of degree $5$ over a given field (recall that a del Pezzo surface $X$ with the action of the group $G$ is called \emph{$G$-minimal} if $\text{rkPic}(X)^G=1$). Namely, the following proposition will be proved in the paper.

\begin{propos}\label{predl:G-min}
Let $G$ be a subgroup of $\mathfrak{S}_5$. There exists a $G$-minimal del Pezzo surface of degree $5$ if and only if one of the two conditions is satisfied.

\begin{enumerate}
    \item The group $G$ contains a subgroup isomorphic to $\mathbb{Z}/5\mathbb{Z}$ $($note that such group~$G$ can act faithfully only on surfaces of types $[e]$ and $[\mathbb{Z}/5\mathbb{Z}])$.
    \item The group $G$ is trivial and there exists a Galois extension of fields $L\supset K$ such that the group $\Gal(L/K)$ is isomorphic to a subgroup in $\mathfrak{S}_5$ and contains a subgroup isomorphic to $\mathbb{Z}/5\mathbb{Z}$.
\end{enumerate}
\end{propos}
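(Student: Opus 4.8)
The plan is to reduce $G$-minimality to a purely group-theoretic condition inside $\mathfrak S_5$ and then run a short case analysis, using Theorems~\ref{theo:theo} and~\ref{theo:autgrp} as input.

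First I would record the $\mathfrak S_5$-module structure of the Picard group. For any del Pezzo surface $X$ of degree $5$ there is an isomorphism of $\mathfrak S_5$-modules $\operatorname{Pic}(X_{K^{\sep}})\otimes\mathbb Q\cong\mathbf 1\oplus U$, where $\mathbf 1$ is the trivial module (spanned by $K_X$) and $U$ is the $4$-dimensional standard representation, i.e.\ the complement of the trivial summand in the permutation representation of $\mathfrak S_5$ on $\{1,\dots,5\}$. Hence for every subgroup $P\subseteq\mathfrak S_5$ one has $\operatorname{rk}\operatorname{Pic}(X_{K^{\sep}})^P=1+\dim U^P$, and $\dim U^P$ equals the number of orbits of $P$ on $\{1,\dots,5\}$ minus one. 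In particular $\operatorname{rk}\operatorname{Pic}(X_{K^{\sep}})^P=1$ if and only if $P$ acts transitively on $\{1,\dots,5\}$, and by the orbit–stabilizer theorem together with Cauchy's theorem this happens exactly when $5\mid|P|$, equivalently when $P$ contains a $5$-cycle.

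Next I would put the Galois and automorphism actions together. Every del Pezzo surface of degree $5$ over $K$ has a $K$-point, so $\operatorname{Pic}(X)=\operatorname{Pic}(X_{K^{\sep}})^{\Gal(K^{\sep}/K)}$, and the Galois action here factors through the subgroup $H=\Im h\subseteq\mathfrak S_5$ defining the type $[H]$. An action of a group $G$ on $X$ over $K$ induces an action on $\operatorname{Pic}(X_{K^{\sep}})$ commuting with the Galois action, so by Theorem~\ref{theo:autgrp} we may regard $G$ (more precisely, its image under $\operatorname{Aut}(X)\hookrightarrow\mathfrak S_5$) as a subgroup of $C_{\mathfrak S_5}(H)$; then $HG$ is a subgroup of $\mathfrak S_5$, of order $|H|\,|G|/|H\cap G|$, and $\operatorname{Pic}(X)^G=\operatorname{Pic}(X_{K^{\sep}})^{\langle H,G\rangle}=\operatorname{Pic}(X_{K^{\sep}})^{HG}$. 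Combining with the previous step, $X$ is $G$-minimal if and only if $HG$ contains a $5$-cycle, i.e.\ $5\mid|H|$ or $5\mid|G|$.

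Finally I would split into the two cases and match them with conditions (1) and (2). If $5\mid|G|$, then $G$ already contains a copy of $\mathbb Z/5\mathbb Z$; conversely, taking $X$ of type $[e]$ (which exists over every $K$ by Theorem~\ref{theo:theo} applied to $L=K$) we have $\operatorname{Aut}(X)=\mathfrak S_5\supseteq G$ and $HG=G$ transitive, so $X$ is $G$-minimal — this is condition (1), and the parenthetical assertion follows because a $5$-cycle $\sigma\in G$ must centralize $H$, forcing $H\subseteq C_{\mathfrak S_5}(\sigma)=\langle\sigma\rangle$. If $5\nmid|G|$, then $G$-minimality forces $5\mid|H|$, so $H$ contains a $5$-cycle $\sigma$, whence $G\subseteq C_{\mathfrak S_5}(H)\subseteq C_{\mathfrak S_5}(\sigma)=\langle\sigma\rangle$ and therefore $G=\{e\}$; moreover the existence of $X$ of type $[H]$ gives, by Theorem~\ref{theo:theo}, a Galois extension $L/K$ with $\Gal(L/K)\cong H\subseteq\mathfrak S_5$ containing $\mathbb Z/5\mathbb Z$, which is condition (2). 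Conversely, condition (2) produces via Theorem~\ref{theo:theo} a del Pezzo surface $X$ of degree $5$ whose type $[H]$ is transitive (any subgroup of $\mathfrak S_5$ containing a $5$-cycle is transitive on $\{1,\dots,5\}$), so $\operatorname{rk}\operatorname{Pic}(X)=1$ and $X$ is $\{e\}$-minimal, i.e.\ minimal. I expect the one genuinely substantive point to be the identity $\operatorname{Pic}(X)^G=\operatorname{Pic}(X_{K^{\sep}})^{HG}$ — namely, checking that the commuting Galois and automorphism actions combine into a single action of the subgroup $HG$ — after which the argument rests only on the elementary facts that transitive subgroups of $\mathfrak S_5$ are precisely those containing a $5$-cycle and that the centralizer in $\mathfrak S_5$ of a $5$-cycle is the order-$5$ cyclic group it generates.
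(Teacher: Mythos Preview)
Your proof is correct and follows the same overall architecture as the paper's: both reduce $G$-minimality to the condition that the subgroup $\Delta=\langle H,G\rangle=HG\subseteq\mathfrak S_5$ contains a $5$-cycle (using $\text{Pic}(X)=\text{Pic}(X_{K^{\sep}})^{\Gal(K^{\sep}/K)}$ from the existence of a $K$-point), then run the same case split via $5\mid |H|\cdot|G|/|H\cap G|$ and the centralizer fact $C_{\mathfrak S_5}(\sigma)=\langle\sigma\rangle$ for a $5$-cycle $\sigma$, and finally invoke Theorems~\ref{theo:theo} and~\ref{theo:autgrp} exactly as the paper does.

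The one mild difference is how the equivalence ``$\text{rk}\,\text{Pic}(X_{K^{\sep}})^\Delta=1\Leftrightarrow\Delta\supseteq\mathbb Z/5\mathbb Z$'' is established. The paper packages this as Lemma~\ref{lemma:lemma}, proving one direction through the graph-theoretic Lemma~\ref{lemma:no G-inv vert} (a case check on the maximal subgroups of $\mathfrak S_5$ without a $5$-cycle) and the other via the rational representation theory of $\mathbb Z/5\mathbb Z$. You instead identify $\text{Pic}(X_{K^{\sep}})\otimes\mathbb Q\cong\mathbf 1\oplus U$ with $U$ the standard $\mathfrak S_5$-module, so that $\text{rk}\,\text{Pic}^\Delta$ equals the number of $\Delta$-orbits on $\{1,\dots,5\}$; rank $1$ is then transitivity, hence by Cauchy the presence of a $5$-cycle. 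Your route is more symmetric and a bit more economical; the paper's route via invariant sets of disjoint $(-1)$-curves keeps the birational picture (contractibility) in the foreground.
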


By analogy with del Pezzo surfaces of degree $5$, one can define the type of del Pezzo surface of degree $6$ over a field $K$: this is the conjugacy class of the image of the group $\Gal(K^{\sep}/K)$ in the group $\mathfrak{S}_3\times\mathbb{Z}/2\mathbb{Z}$, for more details, see Section~\ref{dP6}. As a consequence of Theorem~\ref{theo:theo}, we will prove the following theorem.

\begin{theorem}\label{theo:theo2}
There exists a del Pezzo surface of degree $6$ of type $[H]$ over $K$ if and only if there exists a Galois extension of fields $L\supset K$ with Galois group isomorphic to~$H$.
\end{theorem}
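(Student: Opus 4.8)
The plan is to deduce Theorem~\ref{theo:theo2} from Theorem~\ref{theo:theo} by passing between a del Pezzo surface of degree~$6$ and a del Pezzo surface of degree~$5$ via a single blow-up/blow-down, keeping track of the Galois action on $(-1)$-curves throughout.

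First I would record the combinatorial input. Over $K^{\sep}$ the ten $(-1)$-curves on a del Pezzo surface of degree~$5$ are naturally the two-element subsets of $\{1,2,3,4,5\}$, with two curves meeting iff the corresponding subsets are disjoint, and $\mathfrak{S}_5$ acting as the full symmetry group. The stabilizer of one $(-1)$-curve, say $\{4,5\}$, is $\mathfrak{S}_{\{1,2,3\}}\times\mathfrak{S}_{\{4,5\}}\cong\mathfrak{S}_3\times\mathbb{Z}/2\mathbb{Z}$, and the six $(-1)$-curves meeting $\{4,5\}$ in exactly one element form a hexagon on which this stabilizer acts faithfully, realizing the identification $\mathfrak{S}_3\times\mathbb{Z}/2\mathbb{Z}\cong\operatorname{Aut}(\text{hexagon})$ that defines the type of a del Pezzo surface of degree~$6$ (canonically up to an inner automorphism, which is all the conjugacy class $[H]$ remembers). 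Geometrically, this expresses the fact that contracting a $(-1)$-curve on a del Pezzo surface of degree~$5$ yields one of degree~$6$, whose hexagon of $(-1)$-curves is the image of exactly those six curves.

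The direction ($\Rightarrow$) is then immediate and in fact independent of Theorem~\ref{theo:theo}: if $X$ is a del Pezzo surface of degree~$6$ of type $[H]$, then the image of $h\colon\Gal(K^{\sep}/K)\to\mathfrak{S}_3\times\mathbb{Z}/2\mathbb{Z}$ is isomorphic to $H$, and its (open, normal) kernel cuts out a finite Galois extension $L\supset K$ with $\Gal(L/K)\cong H$. For ($\Leftarrow$), let $L\supset K$ be Galois with $\Gal(L/K)\cong H$, and view $H\subseteq\mathfrak{S}_3\times\mathbb{Z}/2\mathbb{Z}=\mathfrak{S}_{\{1,2,3\}}\times\mathfrak{S}_{\{4,5\}}\subseteq\mathfrak{S}_5$. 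By Theorem~\ref{theo:theo} there is a del Pezzo surface $X$ of degree~$5$ over $K$ of type $[H]$. Its Galois image in $\mathfrak{S}_5$ is conjugate to $H$, hence lies in the stabilizer of some $(-1)$-curve $C\subset X$; so $C$ is Galois-invariant, i.e.\ defined over $K$, and it can be contracted over $K$ to a smooth projective surface $Y$ over~$K$, which is a del Pezzo surface of degree~$6$. The six $(-1)$-curves of $Y$ are the images of the six $(-1)$-curves of $X$ meeting $C$; the Galois action on them factors through $\operatorname{Stab}_{\mathfrak{S}_5}(C)\cong\mathfrak{S}_3\times\mathbb{Z}/2\mathbb{Z}$, faithfully, with image conjugate to $H$. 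Therefore $Y$ has type $[H]$, which completes the proof.

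The two places that require care, rather than mere bookkeeping, are: (i) matching the two incarnations of $\mathfrak{S}_3\times\mathbb{Z}/2\mathbb{Z}$ — automorphisms of the hexagon versus a pair-stabilizer in $\mathfrak{S}_5$ — and checking that the resulting identification, which is only canonical up to inner automorphism, still makes the conjugacy class $[H]$ of the degree~$6$ surface unambiguous and equal to the prescribed one; and (ii) the descent statement that a Galois-stable $(-1)$-curve on a del Pezzo surface of degree~$5$ over $K$ really contracts over $K$ to a del Pezzo surface of degree~$6$ over $K$, with the anticanonical class behaving as expected. I expect (i) to be the main obstacle; (ii) is standard descent, and the remainder is a routine transport of the $\mathfrak{S}_5$-action along the blow-down.
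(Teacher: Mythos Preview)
Your proof is correct and follows essentially the same approach as the paper: embed $H\subseteq\mathfrak{S}_3\times\mathbb{Z}/2\mathbb{Z}\hookrightarrow\mathfrak{S}_5$, invoke Theorem~\ref{theo:theo} to get a degree~$5$ surface of type $[H]$, locate a Galois-invariant $(-1)$-curve (the paper isolates this as Lemma~\ref{lemma:dP5->dP6}, which you prove inline via the stabilizer description), and contract it over $K$. One phrasing slip: the six $(-1)$-curves of $Y$ are the images of those \emph{not} meeting $C$ geometrically (equivalently, those whose labeling subsets meet $\{4,5\}$ in exactly one element, as you correctly said earlier); the three curves that do meet $C$ acquire self-intersection $0$ after the contraction.
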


The plan of the paper is as follows. In Section~\ref{preliminaries} we will collect some general statements. In Sections~\ref{first} and~\ref{mainc}, we consider ways to construct del Pezzo surfaces of degree $5$. In Section~\ref{affine line}, we formulate and prove the key lemmas for proving Theorem~\ref{theo:theo}. In Sections~\ref{inf fields} and~\ref{finite field}, we prove Theorem~\ref{theo:theo} for infinite and for finite fields, respectively. Then, in Section~\ref{aut groups}, we will prove Theorem~\ref{theo:autgrp} and Proposition~\ref{predl:G-min}. And finally, in Section~\ref{dP6}, we prove Theorem~\ref{theo:theo2}.

We will use the following notation. Let $X$ be an algebraic variety over a field $K$ and $L\supset K$ be a field extension. Then by $X_L$ we denote the extension of scalars of $X$ to $L$. By $\overline{K}$ we denote the algebraic closure of the field $K$, and by $K^{\sep}$ we denote its separable closure.

I would like to thank my advisor Constantin Shramov for stating the problem, useful discussions
and constant attention to this work. I also want to thank Andrey Trepalin for his advice about surfaces of degree 6.

\section{Preliminaries}\label{preliminaries}

In this section, we will collect some (well known) general statements that will be convenient to refer later. But first we list all conjugacy classes of subgroups in the group $\mathfrak{S}_5$ and fix the notation:

\begin{itemize}
    \item the class $[e]$ of the trivial subgroup;
    \item the class $[\langle(1,2)\rangle]$ of a subgroup of order 2 generated by a transposition;
    \item the class $[\langle(1,2)(3,4)\rangle]$ of a subgroup of order 2 generated by a product of two disjoint transpositions;
    \item the class $[\langle(1,2),(3,4)\rangle]$ of a subgroup of order 4 generated by a pair of disjoint transpositions;
    \item the class $[\langle(1,2)(3,4),(1,3)(2,4)\rangle]$ of a subgroup of order 4 generated by two different products of two disjoint transpositions;
    \item classes $[\mathbb{Z}/3\mathbb{Z}]$, $[\mathbb{Z}/4\mathbb{Z}]$, $[\mathbb{Z}/5\mathbb{Z}]$ and $[\mathbb{Z}/6\mathbb{Z}]$ of cyclic subgroups of order 3, 4, 5 and~6;
    \item classes $[\mathrm{D_4}]$, $[\mathrm{D_5}]$ of dihedral subgroups of 8 and 10 elements;
    \item two classes $[\langle(1,2,3),(1,2)\rangle]$ and $[\langle(1,2,3),(1,2)(4,5)\rangle]$ of subgroups isomorphic to~$\mathfrak{S}_3$;
    \item the class $[\mathfrak {S} _3\times\mathbb{Z} / 2\mathbb{Z}] $ of a subgroup isomorphic to the direct product \hbox{$\mathfrak{S} _3\times\mathbb {Z} / 2\mathbb{Z} $};
    \item classes $[\mathfrak{A}_4]$, $[\mathfrak{A}_5]$ of alternating subgroups of degrees 4 and 5;
    \item classes $[\mathfrak{S}_4]$, $[\mathfrak{S}_5]$ of symmetric subgroups of degrees 4 and 5;
    \item the class $[\mathrm{GA}(1,5)]$ of the group $\mathrm{GA}(1,5)\simeq  \mathbb{Z}/5\mathbb{Z}\rtimes\mathbb{Z}/4\mathbb{Z}$.
 \end{itemize}  
There are 19 classes in total.

\begin{rem}\label{rem:maximal subgroups not cont Z_5}
Subgroups $\mathfrak{S}_3 \times \mathbb{Z}/2\mathbb{Z}$ and $\mathfrak{S}_4$ are maximal (with respect to set inclusion) among those that do not contain an element of order five.
\end{rem}

\begin{lemma}\label{lemma:K^sep}
Let $X$ be a del Pezzo surface over a field $K$. Then any $(-1)$-curve on $X_{\overline{K}}$ is defined over $K^{\sep}$.
\end{lemma}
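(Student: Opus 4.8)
We want to show every $(-1)$-curve $C$ on $X_{\overline K}$ is already defined over $K^{\sep}$, i.e. that the finite set of $(-1)$-curves is fixed pointwise by $\Gal(\overline K / K^{\sep})$. The natural approach is: first establish that $X_{\overline K}$ is obtained from $X_{K^{\sep}}$ by base change along the purely inseparable extension $\overline K \supset K^{\sep}$, and then argue that purely inseparable base change does not create new $(-1)$-curves — equivalently, that the (finitely many) $(-1)$-curves on $X_{\overline K}$ descend to $K^{\sep}$. I would phrase the descent in terms of the Galois action: $\Gal(\overline K/K^{\sep})$ permutes the $(-1)$-curves on $X_{\overline K}$, and the claim is exactly that this action is trivial.

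First I would reduce to a cohomological/numerical statement. A $(-1)$-curve is a smooth rational curve $C$ with $C^2 = -1$ and $C\cdot K_X = -1$; by adjunction and the structure theory recalled in the introduction, the classes of the $(-1)$-curves form a distinguished finite subset of $\operatorname{Pic}(X_{\overline K})$, and the Galois group acts on this set through its action on $\operatorname{Pic}(X_{\overline K})$. So it suffices to show $\Gal(\overline K/K^{\sep})$ acts trivially on $\operatorname{Pic}(X_{\overline K})$, and moreover that each individual curve (not just its class) is Galois-stable. The key input is that $K^{\sep} \subset \overline K$ is purely inseparable: I would invoke the standard fact that for a geometrically integral (indeed smooth proper) variety $Y$ over a separably closed field $k$, the base change map $\operatorname{Pic}(Y) \to \operatorname{Pic}(Y_{\overline k})$ is an isomorphism — this follows because $\Gal(\overline k/k)$ is trivial when $k$ is separably closed, so $Y_{\overline k}$ is the "same" scheme up to a universal homeomorphism, and Picard groups of del Pezzo surfaces are finitely generated free abelian groups generated by classes of curves defined over $k$ itself (the exceptional configuration over $k^{\sep} = k$). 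Concretely: over $K^{\sep}$ the surface $X_{K^{\sep}}$ is either $\mathbb P^1\times\mathbb P^1$ or a blow-up of $\mathbb P^2$ at points, all of whose exceptional curves and strict transforms are already defined over $K^{\sep}$, and these generate $\operatorname{Pic}$; the $(-1)$-curves on $X_{\overline K}$ have classes lying in $\operatorname{Pic}(X_{K^{\sep}})$ pulled back, hence are cut out by the corresponding curves over $K^{\sep}$.

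Alternatively, and perhaps more cleanly, I would argue directly: let $C \subset X_{\overline K}$ be a $(-1)$-curve and let $\sigma \in \Gal(\overline K/K^{\sep})$; then $\sigma(C)$ is again a $(-1)$-curve with $\sigma(C) \cdot C \geq 0$ if $\sigma(C)\neq C$. But two distinct $(-1)$-curves on a del Pezzo surface of degree $\geq 5$ meet in $0$ or $1$ point, and for degrees $5,6$ the intersection graph is rigid; since $\sigma$ fixes the canonical class and the intersection form, and since — crucially — $C$ and $\sigma(C)$ have the same class in $\operatorname{Pic}$ (as $\operatorname{Pic}(X_{\overline K})$ is generated by classes defined over $K^{\sep}$, using that all "obvious" curves in the blow-up description are defined over $K^{\sep}$), and distinct $(-1)$-curves in these degrees have distinct classes, we get $\sigma(C) = C$. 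The main obstacle is the imperfect-field subtlety: ensuring that $X_{K^{\sep}}$ really does have the standard blow-up description with all exceptional curves defined over $K^{\sep}$ (so that one may not implicitly use $\overline K$-points that are not $K^{\sep}$-points). This is handled by noting that a del Pezzo surface over a separably closed field has a $K^{\sep}$-point and that blowing down the $(-1)$-curves — which are in particular geometrically rational curves whose classes, hence the curves themselves as they are rigid, are Galois-fixed by the preceding discussion — can be performed over $K^{\sep}$; once one has the model over $K^{\sep}$, any $(-1)$-curve over $\overline K$ is the base change of one over $K^{\sep}$ because it is determined by its class together with the $K^{\sep}$-rational blow-down structure.
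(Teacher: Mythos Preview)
Your framing in terms of the Galois action of $\Gal(\overline K/K^{\sep})$ is the central problem. The extension $\overline K/K^{\sep}$ is purely inseparable, so this group is \emph{trivial}: if $\sigma$ fixes $K^{\sep}$ and $a\in\overline K$ satisfies $a^{p^n}\in K^{\sep}$, then $\sigma(a)^{p^n}=a^{p^n}$ forces $\sigma(a)=a$. You even note this (``$\Gal(\overline k/k)$ is trivial when $k$ is separably closed''), but then continue to argue as if ``$\sigma(C)=C$ for all $\sigma$'' were the content of the lemma. It is not: trivial Galois invariance is automatic and says nothing, because Galois descent does not apply to a non-Galois (purely inseparable) extension. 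Your entire third paragraph, comparing $C$ with $\sigma(C)$, is therefore vacuous.

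The Picard-group route in your second paragraph can be made to work, but not the way you present it. The isomorphism $\operatorname{Pic}(X_{K^{\sep}})\simeq\operatorname{Pic}(X_{\overline K})$ is not a consequence of ``trivial Galois group''; it follows because $H^1(X,\mathcal O_X)=0$ makes the Picard scheme \'etale over $K$, so its $K^{\sep}$-points and $\overline K$-points agree. Your attempt to justify it by saying $X_{K^{\sep}}$ already has the standard blow-up description is circular: that description presupposes the $(-1)$-curves are defined over $K^{\sep}$. What actually closes the argument is rigidity: given that the class of a $(-1)$-curve $C\subset X_{\overline K}$ comes from a line bundle $L$ on $X_{K^{\sep}}$, flat base change gives $h^0(X_{K^{\sep}},L)=h^0(X_{\overline K},L_{\overline K})=1$, so the unique effective divisor in $|L|$ base-changes to $C$. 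You allude to rigidity but never deploy it this way.

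The paper's proof bypasses all of this by working with the Hilbert scheme $\mathcal H$ of $(-1)$-curves on $X$, computing the tangent space $T_{[l]}\mathcal H\simeq H^0(l,\mathcal N_{l/X})\simeq H^0(\mathbb P^1,\mathcal O(-1))=0$ to see that $\mathcal H$ is smooth of dimension zero (hence \'etale over $K$), and concluding that the finitely many $\overline K$-points of $\mathcal H$ are all $K^{\sep}$-points. This packages the rigidity input and the descent along $\overline K/K^{\sep}$ into a single clean step.
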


\begin{proof}
Let $\mathcal{H}$ be the Hilbert scheme of $(-1)$-curves on $X$. Let's prove that this scheme is smooth. To do this, we calculate the tangent space at the point $[l] \in \mathcal{H}$ corresponding to $(-1)$-curve $l\simeq\mathbb{P}^1$. The tangent space $T_{[l]}\mathcal{H}$ is isomorphic to $H^0(l,\mathcal{N}_{l/X})$, see~\hbox{\cite[Chapter VI, §4, Theorem 4]{Shaff}}. Thus, we obtain $$T_{[l]}\mathcal{H} \simeq H^0(\mathbb{P}^1, \mathcal{O}_{\mathbb{P}^1}(-1))=0.$$ Hence, the scheme $\mathcal{H}$ is smooth. Therefore, $K^{\sep}$-points are dense in $\mathcal{H}_{\overline{K}}$, see~\hbox{\cite[\href{https://stacks.math.columbia.edu/tag/056 U }{Lemma 056U}]{stacks-project}}. But $\mathcal{H}_{\overline{K}}$ is finite, hence all points of the scheme $\mathcal{H}_{\overline{K}}$ are defined over~$K^{\sep}$.\end{proof}

\begin{lemma}\label{lemma:nec2} 
Let $K$ be a field. Let $X$ be a del Pezzo surface of degree $5$ of type~$[H]$ over $K$.
Then there is a Galois extension of fields $L\supset K$ with Galois group isomorphic to $H$, and each $(-1)$-curve on $X_{\overline{K}}$ is defined over $L$.
\end{lemma}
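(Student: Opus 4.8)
The plan is to exploit the homomorphism $h\colon\Gal(K^{\sep}/K)\to\mathfrak S_5$ directly. By Definition~\ref{def:type5}, $\Im h$ belongs to the conjugacy class $[H]$, so after choosing a suitable identification of the graph of $(-1)$-curves with the standard one we may assume $\Im h = H$ as a subgroup of $\mathfrak S_5$. Set $N = \ker h$, a closed normal subgroup of $\Gal(K^{\sep}/K)$, and let $L = (K^{\sep})^{N}$ be its fixed field. Then $L\supset K$ is a Galois extension, and by the Galois correspondence $\Gal(L/K)\cong \Gal(K^{\sep}/K)/N \cong \Im h = H$. This produces the required Galois extension with group isomorphic to $H$.

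Next I would argue that every $(-1)$-curve on $X_{\overline K}$ is defined over $L$. First, by Lemma~\ref{lemma:K^sep}, each such curve is already defined over $K^{\sep}$, so it is enough to show the set of all $(-1)$-curves is stable under $N = \Gal(K^{\sep}/L)$. But the action of $\Gal(K^{\sep}/K)$ on the (finite) set of $(-1)$-curves factors through $h$ — this is precisely how $h$ was defined — so $N$, being the kernel of $h$, acts trivially on this set. Hence each $(-1)$-curve is $N$-invariant. A curve on $X_{K^{\sep}}$ that is invariant under $\Gal(K^{\sep}/L)$ descends to a curve defined over $L$ (Galois descent for closed subschemes), which is what we want.

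The one point that requires a little care — and is the main thing to get right — is the interplay between the two meanings of "type": the conjugacy class of $\Im h$ versus the isomorphism class of $H$ as an abstract group. The statement only asserts $\Gal(L/K)\cong H$ as groups, so the argument above suffices; but I should be careful that the identification of the graph of $(-1)$-curves with the abstract graph whose automorphism group is $\mathfrak S_5$ is only canonical up to conjugation, which is exactly why the definition of type uses conjugacy classes and why the conclusion is only up to isomorphism. No genuine obstacle arises: once $h$ and its kernel are in hand, the Galois correspondence and Galois descent do all the work, with Lemma~\ref{lemma:K^sep} supplying the separability needed to stay inside $K^{\sep}$.
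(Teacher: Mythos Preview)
Your argument is correct and is essentially identical to the paper's own proof: both take $L=(K^{\sep})^{\ker h}$, invoke the Galois correspondence to get $\Gal(L/K)\simeq\Im h\simeq H$, and combine Lemma~\ref{lemma:K^sep} with the triviality of the $\ker h$-action to descend each $(-1)$-curve to $L$. Your additional remarks about conjugacy classes versus abstract isomorphism classes are accurate and harmless, though the paper does not belabor this point.
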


\begin{proof}
Let's consider the separable closure of our field. From Lemma~\ref{lemma:K^sep} we know that every $(-1)$-curve on $X_{\overline{K}}$ is defined over $K^{\sep}$. Hence, the group $\text{Gal} (K^{\sep}/K)$ acts on the graph of $(-1)$-curves by automorphisms, from this action we obtain the homomorphism $h\colon\text{Gal}(K^{\sep}/K)\xrightarrow[]{} \mathfrak{S}_5$. Denote by $G$ the kernel of this homomorphism. Put $L=(K^{\sep})^G$, then $L\supset K$ is a finite Galois extension with the Galois group $\Gal(L/K)\simeq H$, see~
\hbox{\cite[\href{https://stacks.math.columbia.edu/tag/0BML}{Theorem 0BML}]{stacks-project}}. Since each $(-1)$-curve is invariant with respect to the action of the group $G$, then each $(-1)$-curve on $X_{\overline{K}}$ is defined over~$L$.\end{proof}

\begin{definition}\label{def:generalposition} 
Let $K$ be a field. A set of different $K$-points of the projective plane $\mathbb{P}^2_K$ is called points in general position if no three of them are collinear.
\end{definition}

\begin{lemma}\label{lemma:freeaction}
Let $X$ be a del Pezzo surface of degree $d\leq 5$ over a field $K$. Then the natural action of the group \emph{Aut}$(X)$ on the graph of $(-1)$-curves of the surface $X_{K^{\sep}}$ by automorphisms is faithful.
\end{lemma}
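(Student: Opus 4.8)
The plan is to show that if an automorphism $g \in \mathrm{Aut}(X)$ acts trivially on the graph of $(-1)$-curves of $X_{K^{\sep}}$, then $g$ is the identity. It suffices to argue over $K^{\sep}$ (or even $\overline{K}$), since an automorphism is trivial iff it is trivial after base change; so I would work with $\overline{X} := X_{\overline{K}}$ and an automorphism $g$ fixing every $(-1)$-curve. For degree $d \leq 5$ the $(-1)$-curves span $\mathrm{Pic}(\overline{X})$, so $g$ acts trivially on $\mathrm{Pic}(\overline{X})$; in particular $g$ preserves the class of each curve in a suitable configuration. The strategy is to exhibit enough $(-1)$-curves, fixed as sets by $g$, whose intersection points give enough fixed points (and fixed tangent directions) to force $g = \mathrm{id}$, using that $\overline{X}$ is obtained by blowing up $\mathbb{P}^2$ at $9-d \geq 4$ points in general position (or, for $d=8$-type cases with $\mathbb{P}^1 \times \mathbb{P}^1$, handled separately, but here $d \leq 5$ so this does not arise).

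Concretely, I would fix a realization of $\overline{X}$ as the blow-up $\pi \colon \overline{X} \to \mathbb{P}^2$ at points $p_1, \dots, p_{9-d}$ in general position, with exceptional curves $E_1, \dots, E_{9-d}$ and, for $d \leq 6$, the strict transforms $L_{ij}$ of lines through $p_i, p_j$; all of these are $(-1)$-curves. Since $g$ fixes each $E_i$ as a set and fixes the class $H = \pi^* \mathcal{O}_{\mathbb{P}^2}(1)$, the automorphism $g$ descends to an automorphism $\bar g$ of $\mathbb{P}^2$ (an element of $\mathrm{PGL}_3$) fixing each point $p_i$. For $9-d \geq 4$ points in general position, a projective transformation fixing all of them is the identity — four points in general position in $\mathbb{P}^2$ rigidify $\mathrm{PGL}_3$. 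Hence $\bar g = \mathrm{id}_{\mathbb{P}^2}$, so $g$ is a deck transformation of the birational morphism $\pi$; but $\pi$ is a composition of blow-ups at honest points, which has trivial deck group, so $g = \mathrm{id}_{\overline{X}}$. For $d = 5$ we have $9 - d = 4$ points, exactly the rigidifying number, so the argument is tight but still works.

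The one place that needs care, and which I expect to be the main obstacle to state cleanly, is the reduction $g \rightsquigarrow \bar g \in \mathrm{PGL}_3$: one must check that $g$ really does descend to $\mathbb{P}^2$, i.e. that $g$ preserves not just the numerical class $H$ but maps the linear system $|H - E_1 - \cdots|$-type contraction to itself compatibly. This follows because the contraction $\pi$ can be characterized intrinsically: the $E_i$ are precisely the $(-1)$-curves meeting the fewest others in the configuration (or: one picks the $E_i$ as a maximal collection of pairwise disjoint $(-1)$-curves of the right size), and $g$ permutes $(-1)$-curves preserving intersection numbers; since by hypothesis $g$ fixes each $E_i$ individually, $g$ commutes with the contraction of $E_1 \cup \cdots \cup E_{9-d}$, giving $\bar g$. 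Alternatively, and perhaps more robustly for the write-up, I would phrase the final step as: $g$ fixes every point of $\overline{X}$ lying on some $(-1)$-curve, because each $(-1)$-curve $\cong \mathbb{P}^1$ is fixed as a set and meets other fixed $(-1)$-curves in at least three points (here one uses $d \leq 5$ to guarantee each $(-1)$-curve meets enough others transversally in distinct points — a small combinatorial check on the $(-1)$-curve configuration), and an automorphism of $\mathbb{P}^1$ fixing three points is trivial; then the fixed locus of $g$ contains the union of all $(-1)$-curves, which is ample enough (it is a divisor, or simply a non-degenerate subset) to force $g = \mathrm{id}$. I would present whichever of these two endings is shorter given the configuration data already available in the paper, and flag that the degree hypothesis $d \leq 5$ is what makes the $(-1)$-curve graph rich enough for either argument.
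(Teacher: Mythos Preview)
Your proof is correct, and your primary approach --- contracting the fixed exceptional curves to obtain an automorphism of $\mathbb{P}^2$ fixing $9-d \geq 4$ points in general position, hence the identity --- is exactly the argument the paper gives, only stated more tersely there. Your alternative ending (each $(-1)$-curve meets at least three others, so is fixed pointwise) is also valid for $d\leq 5$ but is not needed.
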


\begin{proof}
Let's show that the action is faithful. Denote the kernel of this action by $G$. Then there exists $G$-equivariant morphism $$X_{K^{\sep}} \xrightarrow[]{}\mathbb{P}_{K^{\sep}}^2,$$ contracting $9 - d \geq 4$ disjoint $(-1)$-curves. The image of these $9 - d$ curves will be $9 - d$ points in general position. But we know that any automorphism of the projective plane fixing four points in general position is trivial. So $G$ is trivial and the action is faithful.

\end{proof}

\begin{figure}[b!]
\centerline{\includegraphics{a-5.mps}}
\caption{\unskip}\label{ris2}	
\end{figure}

Denote by $\Gamma$ the intersection graph of $(-1)$-curves on a del Pezzo surface of degree $5$ over an algebraically closed field. The graph $\Gamma$ is a Kneser graph $KG_{5,2}$, see Fig.~\ref{ris2}. That is, its vertices correspond to two-element subsets of a five-element set, and two vertices are adjacent if and only if the two corresponding sets are disjoint. Its automorphism group is isomorphic to the symmetric group $\mathfrak{S}_5$.

\begin{sled}[see {\cite[Theorem 8.5.8]{D} or \cite[Proposition 3.4]{S2}}]\label{sled:S_5}
The automorphism group of a del Pezzo surface of degree $5$ embeds in the symmetric group~$\mathfrak{S}_5$.
\end{sled}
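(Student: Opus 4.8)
The plan is to deduce this immediately from Lemma~\ref{lemma:freeaction} together with the description of the graph $\Gamma$ recalled just above. First I would note that any $K$-automorphism of $X$ extends to an automorphism of $X_{K^{\sep}}$, and since automorphisms preserve intersection numbers of curves, such an automorphism permutes the ten $(-1)$-curves on $X_{K^{\sep}}$ while respecting adjacency in $\Gamma$. This produces a group homomorphism $\mathrm{Aut}(X)\to\mathrm{Aut}(\Gamma)$.

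Next I would invoke the fact already recorded in the excerpt that $\Gamma$ is the Kneser graph $KG_{5,2}$, whose automorphism group is isomorphic to $\mathfrak{S}_5$; hence the homomorphism above lands in a group isomorphic to $\mathfrak{S}_5$. Finally, Lemma~\ref{lemma:freeaction}, applied with $d=5$, states precisely that this action is faithful, i.e. that the homomorphism is injective. Combining these, $\mathrm{Aut}(X)$ embeds into $\mathfrak{S}_5$, which is the assertion.

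I do not expect a genuine obstacle: the corollary is essentially a repackaging of Lemma~\ref{lemma:freeaction}. The only point deserving a word of care is that the action on $\Gamma$ is actually well defined — that every $(-1)$-curve of $X_{\overline{K}}$ already lives on $X_{K^{\sep}}$, so that $K$-automorphisms and Galois elements act on one and the same finite graph — but this is exactly the content of Lemma~\ref{lemma:K^sep}. Alternatively one could phrase the entire argument over $\overline{K}$, where it is equally short, and cite the references \cite[Theorem 8.5.8]{D}, \cite[Proposition 3.4]{S2} for the identification $\mathrm{Aut}(\Gamma)\simeq\mathfrak{S}_5$.
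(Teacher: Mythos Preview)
Your proposal is correct and is precisely the argument the paper has in mind: the corollary is stated with no separate proof, immediately after Lemma~\ref{lemma:freeaction} and the identification $\mathrm{Aut}(\Gamma)\simeq\mathfrak{S}_5$, so it is meant to follow exactly as you wrote --- faithful action on $\Gamma$ plus $\mathrm{Aut}(\Gamma)\simeq\mathfrak{S}_5$. Your side remark about Lemma~\ref{lemma:K^sep} ensuring the action is well defined over $K^{\sep}$ is accurate and matches how the paper sets things up.
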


We will need the following lemma in Section~\ref{dP6} when proving Theorem~\ref{theo:theo2}.

\begin{lemma}\label{lemma:dP5->dP6}
Let $\Gamma$ be the graph of $(-1)$-curves on a del Pezzo surface of degree~$5$. Let $G\simeq\mathfrak{S}_3\times\mathbb{Z}/2\mathbb{Z}$ be a subgroup in $\mathrm{Aut}(\Gamma)\simeq\mathfrak{S}_5$. Then there exists a vertex $v$ of the graph that is invariant with respect to the action of the group $G$.
\end{lemma}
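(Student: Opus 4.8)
The plan is to translate the statement into elementary permutation‑group theory, using the identification $\mathrm{Aut}(\Gamma)\simeq\mathfrak{S}_5$ acting on the two‑element subsets of $\{1,2,3,4,5\}$, so that ``an invariant vertex'' means a $2$‑element subset fixed by every element of $G$. The key point is that a subgroup $G\subset\mathfrak{S}_5$ with $G\simeq\mathfrak{S}_3\times\mathbb{Z}/2\mathbb{Z}$ has order $12$ and contains a \emph{unique} subgroup of order $3$: in $\mathfrak{S}_3\times\mathbb{Z}/2\mathbb{Z}$ the elements of order $3$ are precisely the two non‑trivial elements of $\mathbb{Z}/3\mathbb{Z}\times\{e\}$, so that subgroup is the only one of order $3$ and is therefore normal (indeed characteristic) in $G$.

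Next I would fix a generator $\tau$ of this order‑$3$ subgroup $P\trianglelefteq G$. An element of order $3$ in $\mathfrak{S}_5$ is necessarily a $3$‑cycle, so $\tau$ fixes exactly two of the five indices, say $a$ and $b$, and cyclically permutes the remaining three. Now I use normality: for any $g\in G$ the conjugate $g\tau g^{-1}$ again generates $P$, hence $g\tau g^{-1}\in\{\tau,\tau^{-1}\}$. The set of points fixed by $g\tau g^{-1}$ equals $g(\{a,b\})$, whereas $\tau$ and $\tau^{-1}$ both fix precisely $\{a,b\}$; therefore $g(\{a,b\})=\{a,b\}$ for every $g\in G$. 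Thus the vertex of $\Gamma$ corresponding to the pair $\{a,b\}$ is $G$‑invariant, which is exactly what is claimed.

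Alternatively, one could invoke the list of conjugacy classes of subgroups of $\mathfrak{S}_5$ recalled at the beginning of this section: there is a single class $[\mathfrak{S}_3\times\mathbb{Z}/2\mathbb{Z}]$, with representative $\mathfrak{S}_{\{1,2,3\}}\times\langle(4,5)\rangle$, which visibly fixes the vertex $\{4,5\}$; since $\mathfrak{S}_5$ acts transitively on the vertices of $\Gamma$, conjugating this representative to an arbitrary $G$ carries the fixed vertex along. I would probably present the first argument, since it is self‑contained and does not rely on the classification.

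I do not expect a real obstacle here. The only points needing a little care are the verification that $G\simeq\mathfrak{S}_3\times\mathbb{Z}/2\mathbb{Z}$ indeed has a unique subgroup of order $3$ (hence a normal one), and the observation that conjugation in $\mathfrak{S}_5$ carries the fixed‑point set of a permutation onto the fixed‑point set of its conjugate — both are routine.
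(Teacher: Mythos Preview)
Your proof is correct. Your preferred argument is genuinely different from the paper's: the paper simply invokes the list of conjugacy classes of subgroups in $\mathfrak{S}_5$ established earlier, notes that the representative $G'=\langle(1,2,3),(1,2),(4,5)\rangle$ visibly fixes the vertex $\{4,5\}$, and transports this fixed vertex along a conjugation taking $G'$ to $G$ --- exactly your ``alternative'' paragraph. Your main argument instead exploits the intrinsic structure of $G$, namely that its unique (hence normal) Sylow $3$-subgroup is generated by a $3$-cycle whose two fixed points must then be permuted among themselves by all of $G$. Your route is self-contained and does not appeal to the classification; the paper's route is shorter precisely because that classification is already available in the text. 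Either is entirely adequate for such an elementary lemma.
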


\begin{proof}
Note that the graph $\Gamma$ has the vertex that is invariant with respect to the action of the group $G' = \langle(1,2,3), (1,2), (4,5) \rangle$. Indeed, this is the vertex corresponding to the subset $\{4,5\}$, see Fig.~\ref{ris2}. The groups $G$ and $G'$ are conjugate in $\mathfrak{S}_5$, i.e. there is a permutation $\sigma\in \mathfrak{S}_5$ such that $G' = \sigma^{-1}G\sigma$. Hence the vertex corresponding to the subset $\{\sigma(4),\sigma(5)\}$ is invariant with respect to the action of the group~$G$.\end{proof}

The following lemma is proved by similar reasoning.

\begin{lemma}\label{lemma:no G-inv vert}
Let $\Gamma$ be the graph of $(-1)$-curves on a del Pezzo surface of degree~$5$. Let $G$ be a subgroup in $\mathrm{Aut}(\Gamma)\simeq\mathfrak{S}_5$. Suppose there is no $G$-invariant set of vertices of the graph $\Gamma$, such that no two of this vertices are connected by an edge. Then $G$ contains a subgroup isomorphic to $\mathbb{Z}/5\mathbb{Z}$.
\end{lemma}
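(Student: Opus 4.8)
The plan is to analyze the possible images of $G$ in $\mathfrak{S}_5$ by going through the list of $19$ conjugacy classes of subgroups and checking which ones do admit a $G$-invariant independent set of vertices (i.e.\ a $G$-invariant set of pairwise non-adjacent vertices) in the Kneser graph $KG_{5,2}$. An independent set in $KG_{5,2}$ is a set of two-element subsets of $\{1,2,3,4,5\}$ that pairwise intersect; the maximal such sets are the five "stars" $S_i = \{\{i,j\} : j\neq i\}$ (of size $4$) together with the triangles $\{\{a,b\},\{b,c\},\{a,c\}\}$ (of size $3$), and their subsets. So I want to show: if $G$ does not stabilize any nonempty set of this shape, then $5\mid |G|$, hence (by Cauchy) $G$ contains a $\mathbb{Z}/5\mathbb{Z}$.

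The key observation is that a nonempty $G$-invariant independent set $T$ of vertices, being a union of $G$-orbits inside an independent set, is itself independent; and conversely any $G$-invariant independent set is contained in a maximal one, but maximality need not be preserved by $G$, so I should argue directly with $T$ minimal. If $|T|=1$, then $G$ fixes a vertex $\{a,b\}$, i.e.\ $G$ lies in the stabilizer of the partition $\{a,b\}\sqcup\{c,d,e\}$, which is $\mathfrak{S}_3\times\mathfrak{S}_2 \cong \mathfrak{S}_3\times\mathbb{Z}/2\mathbb{Z}$ — exactly the situation of Lemma~\ref{lemma:dP5->dP6}. If $|T|=2$, say $T=\{\{a,b\},\{a,c\}\}$ (the two edges share a common point since they are non-adjacent in $KG_{5,2}$, meaning they intersect), then $G$ either fixes both or swaps them; in both cases $G$ fixes the intersection $\{a\}$ and preserves $\{b,c\}$, hence $G$ fixes $a$ and stabilizes $\{b,c\}$ and thus also $\{d,e\}$, landing $G$ inside $\mathfrak{S}_2\times\mathfrak{S}_2 \subset \mathfrak{S}_4$ (fixing $a$). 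If $|T|=3$, then $T$ is either a star $S_i$ minus a vertex or a full triangle; in the triangle case $T=\{\{a,b\},\{b,c\},\{a,c\}\}$ determines the subset $\{a,b,c\}$, so $G$ preserves $\{a,b,c\}$ and $\{d,e\}$, i.e.\ $G\subset \mathfrak{S}_3\times\mathfrak{S}_2$; in the other case $G$ fixes $i$. If $|T|=4$, then $T=S_i$ is a full star and $G$ fixes $i$, so $G\subset\mathfrak{S}_4$.

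So the contrapositive reduces to: every subgroup of $\mathfrak{S}_5$ contained in a point stabilizer $\mathfrak{S}_4$ or in $\mathfrak{S}_3\times\mathbb{Z}/2\mathbb{Z}$ admits a $G$-invariant independent set of vertices of $\Gamma$ — which is clear, since $\mathfrak{S}_4$ (fixing $i$) stabilizes the star $S_i$ and $\mathfrak{S}_3\times\mathbb{Z}/2\mathbb{Z}$ (stabilizing $\{a,b\}$) stabilizes the single vertex $\{a,b\}$. Now invoke Remark~\ref{rem:maximal subgroups not cont Z_5}: $\mathfrak{S}_3\times\mathbb{Z}/2\mathbb{Z}$ and $\mathfrak{S}_4$ are precisely the maximal subgroups of $\mathfrak{S}_5$ that do not contain an element of order $5$. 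Hence if $G$ does not contain a $\mathbb{Z}/5\mathbb{Z}$, then $G$ contains no element of order $5$ (Cauchy), so $G$ is contained in some maximal subgroup with the same property, i.e.\ in a conjugate of $\mathfrak{S}_4$ or of $\mathfrak{S}_3\times\mathbb{Z}/2\mathbb{Z}$, and therefore fixes a vertex or a star of $\Gamma$, contradiction. The main obstacle — really the only thing requiring care — is the bookkeeping that a $G$-invariant independent set forces $G$ into one of these two subgroup shapes; once that dictionary between independent sets and combinatorial configurations $\{a,b\}$ / $\{a,b,c\}$ / "point $i$" is set up, the rest is a direct appeal to the two cited facts.
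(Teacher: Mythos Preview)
Your proof is correct and, in its final paragraph, follows essentially the same route as the paper: assume $G$ has no element of order $5$, pass to a maximal such subgroup via Remark~\ref{rem:maximal subgroups not cont Z_5}, and exhibit a $G$-invariant independent set (a single vertex for $\mathfrak{S}_3\times\mathbb{Z}/2\mathbb{Z}$ via Lemma~\ref{lemma:dP5->dP6}, the four-vertex star $S_i$ for $\mathfrak{S}_4$). The case analysis on $|T|\in\{1,2,3,4\}$ in your first paragraph is correct but proves the converse direction and is not needed for the lemma as stated.
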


\begin{proof}
Suppose that $G$ does not contain an element of order $5$, we can assume that $G$ is maximal with this property. Then, by Remark~\ref{rem:maximal subgroups not cont Z_5}, $G\simeq\mathfrak{S}_3\times\mathbb{Z}/2\mathbb{Z}$ or $G\simeq\mathfrak{S}_4$. In the first case, there is a $G$-invariant vertex by Lemma~\ref{lemma:dP5->dP6}, in the second case there is a $G$-invariant set of four vertices, no two of which are connected by an edge. We got a contradiction, so the group $G$ contains a subgroup isomorphic to $\mathbb{Z}/5\mathbb{Z}$. \end{proof}

\section{Construction of del Pezzo surfaces of degree 5. The first construction}\label{first}

In this section we will construct del Pezzo surfaces of degree $5$ of some types by blowing up the projective plane at a closed set. From this closed set it is required that, after the transition to a separable closure, it becomes a union of four points in general position.

\begin{ex}\label{ex:1}
\emph{The surface of type $[e]$ can be constructed over any field $K$. To do this, we choose four points in general position on the projective plane $\mathbb{P}^2_K$ (such exist even over a field of two elements). Blowing up the plane at the chosen four points, we get a del Pezzo surface of degree $5$. Moreover, all $(-1)$-curves of the constructed surface are defined over $K$, so the group $\Gal(K^{\sep}/K )$ acts on the graph of $(-1)$-curves trivially. Therefore, the constructed surface has the type $[e]$.}
\end{ex}

\begin{ex}\label{ex:2}
\emph{A surface of the type $[\langle(1,2)\rangle]$ can no longer be constructed over any field. Let $K$ be a field, and let $L\supset K$ be a Galois extension of fields with Galois group isomorphic to $\mathbb{Z}/2\mathbb{Z}$. Denote by $\omega$ the element generating the extension~\mbox{$L\supset K$}. By $\Bar{\omega}$, we denote the image of $\omega$ under action by nontrivial element of the Galois group $\Gal(L/K)$. Consider the four points in the projective plane~\mbox{$\mathbb{P}^2_{K^{\sep}}$}: $$(1:0:0),\; (0:1:0),\; (1:\omega:1),\; (1:\Bar{\omega}:1).$$ Not all of these points are defined over~$K$, nevertheless this four forms a closed set defined over~$K$, and we can blow up the projective plane $\mathbb{P}^2_K$ at this closed set. The image of the homomorphism $h$ introduced in Section~\ref{introduction} will be a subgroup of order 2, and more precisely, a subgroup generated by a transposition.
Thus, the constructed del Pezzo surface of degree $5$ has the type $[\langle(1,2)\rangle]$}.
\end{ex}

\begin{rem}\label{rem:remark1}\rm 
There exist non-isomorphic del Pezzo surfaces of degree $5$ of the same type. Indeed, consider a field $K$ that has two distinct quadratic Galois extensions $L\supset K$ and $L'\supset K$. For each extension, you can construct a surface of the type $[\langle(1,2)\rangle]$ in the way described in the Example~\ref{ex:2}. The resulting del Pezzo surfaces of degree $5$ have the same type, but they are not isomorphic.
\end{rem}

\begin{ex}\label{ex:3}
\emph{Let's use the notation of the Example~\ref{ex:2} and consider the four points in the projective plane $\mathbb{P}^2_{K^{\sep}}$: $$(1:\omega:0),\; (1:\Bar{\omega}:0),\; (1:0:\omega),\; (1:0:\Bar{\omega}).$$ This four forms a closed set defined over $K$. Blowing up the projective plane $\mathbb{P}^2_{K}$ in this closed set, we get a del Pezzo surface of degree $5$ of type $[\langle(1,2)(3,4)\rangle]$.}
\end{ex}

\begin{ex}\label{ex:5}
\emph{Let $K$ be a field and suppose we have a triple of points in general position on the projective plane $\mathbb{P}^2_{K^{\sep}}$, invariant with respect to the action of the group $\Gal(K^{\sep}/K)$. Suppose we also know that the image of action of the group $\Gal(K^{\sep}/K)$ in the permutation group of these three points is isomorphic to $\mathbb{Z}/3\mathbb{Z}$. Let's add to this triple a $K$-point such that the resulting four points are in a common position over $K^{\sep}$ (any $K$-point will be suitable, since the original three points are in general position over $K^{\sep}$ and permute in a cycle, that is, none of the three lines passing through two of this three points can contain $K$-point). The resulting four points form a closed set defined over $K$. Blowing up the projective plane $\mathbb{P}^2_{K}$ at this closed set, we get a del Pezzo surface of degree $5$ of type $[\mathbb{Z}/3\mathbb{Z}]$.}
\end{ex}

\begin{rem}\label{rem:first costruction}
We can obtain only del Pezzo surfaces of degree $5$ of type $[H]$, where $H$ is contained in $G\subset \mathfrak{S}_5$, $G\simeq S_4$ via this construction.
\end{rem}

\section{Construction of del Pezzo surfaces of degree 5. The second construction}\label{mainc}

Consider an arbitrary field $K$. Let $C$ be a smooth rational conic in the projective plane $\mathbb{P}^2_{K}$. Suppose that there is a five $K^{\sep}$-points $P_1$,~$P_2$, $P_3$, $P_4$, $P_5$  on $C$, invariant with respect to the action of the group $\text{Gal} ( K^{\sep}/K)$. Since these five points are invariant with respect to the action of the group $\text{Gal} (K^{\sep}/K)$, then the union of these five points is a closed set $Z$ defined over $K$.

Let's blow up the plane $\mathbb{P}^2_{K}$ in the set $Z$ and blow down the strict transform of the conic $C$. We get a surface $X$, which is a del Pezzo surface of degree $5$. Later, in Section~\ref{finite field}, we will see that this construction gives all possible types of del Pezzo surfaces of degree $5$, except the surface of type $[e]$ over fields of two and three elements.


From this construction we clearly see an isomorphism between group $\mathfrak{S}_5$ and automorphism group of the graph of $(-1)$-curves on $X_{K^{\sep}}$. When we permute the five specified points on the conic, we also permute ten lines, connecting these points, and, as a consequence, we permute $(-1)$-curves preserving intersections on the blow-up surface, that is, we get an automorphism of the graph. The only permutation of points that acts trivially is the identity permutation. Thus, the specified action sets the isomorphism of the group $\mathfrak{S}_5$ with the automorphism group of the graph of $(-1)$-curves.

\begin{rem}
\rm Let $X$ be a del Pezzo surface of degree $5$ over a field $K$. It is well-known, that $X$ has a $K$-point, see, for example,~\cite{S2}. Suppose there is a $K$-point that does not lie on $(-1)$-curves (such a $K$-point always exists if the field $K$ is infinite, since in this case $K$-points on $X$ are Zariski dense (cf. the proof \hbox{\cite[Theorem 4.4]{S2})}, but, for example, there is no such point on the del Pezzo surface of degree $5$ of type $[e]$ over a field of two elements). Then the construction described above is reversible. Namely, you can blow up a point that does not lie on $(-1)$-curves and get surface $X'$, which is a del Pezzo surface of degree 4. On $X'_{K^{\sep}}$ there will be five non-intersecting $(-1)$-curves invariant with respect to the action of the group $\Gal(K^{\sep}/K)$ (it consists of exactly $(-1)$-curves intersecting the exceptional curve $l$ of the blow up $X'\xrightarrow{}X$). Hence, the contraction of $X'$ to $\mathbb{P}^2_K$ is defined, and the image of~$l$ is a smooth rational conic.

This reason proves that over an infinite field any del Pezzo surface of degree $5$ can be obtained by the described construction. This consideration can also be used for an alternative proof of the necessary condition in Theorem~\ref{theo:theo} over infinite fields \hbox{(cf.~\cite[§10]{AB})}.


\end{rem}

\section{Points of the affine line and the action of a Galois group}\label{affine line}

As we have just seen, to construct a del Pezzo surface of degree $5$ of type $[H]$, it is enough to present five points on smooth rational conic whose permutations under the action of the Galois group $\text{Gal} ( K^{\sep}/K)$ form a subgroup of $R\subset{\mathfrak{S}_5}$, $R\in[H]$. Since smooth rational conic is isomorphic to a projective line, it is enough for us to present this five points on $\mathbb{P}^1$ and in fact even on the affine line $\mathbb{A}^1$.

The idea of the proof of the following lemma was taken from Jeremy Rouse's answer in the discussion~\cite{act} on the site \texttt{https://mathoverflow.net}.

\begin{lemma}\label{lemma:roots}
Suppose we have a natural number $n$, a group $G$, and a transitive action of the group $G$ on the set $\{1,2,\dots, n\}$. Suppose we also have a Galois extension of fields $L\supset K$ with Galois group isomorphic to $G$. Denote by $H$ the stabilizer of the element $1$ in $G$ and consider the field $M = L^H$. Then the following assertions hold.
\begin{enumerate}

\item The extension $M\supset K$ has degree $n$.

\item There exists an element $\beta_1 \in M$ such that $M = K(\beta_1)$.

\item Let $\mu(x)$ be the minimal polynomial of the element $\beta_1$ over $K$. Then the action of the Galois group $\Gal(K^{\sep}/K)$ on the roots of the polynomial $\mu(x)$ is equivalent to the action of the group $G$ on the set $\{1,2,\dots, n\}$. 

\item One can number the roots $\beta_1$, $\beta_2$, $\dots$, $\beta_n$ of the polynomial $\mu(x)$ in such a way that for any $g\in G$ and for any $i\in \{1,2,\dots, n\}$ the equality $g(\beta_i) = \beta_{g(i)}$ holds.
\end{enumerate}
\end{lemma}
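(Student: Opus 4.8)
The plan is to run a standard Galois-correspondence argument, deducing each of the four assertions essentially from the previous ones. For (1), the extension $M = L^H$ sits inside the Galois extension $L \supset K$, so by the fundamental theorem of Galois theory $[M:K] = [G:H]$; since the action of $G$ on $\{1,\dots,n\}$ is transitive and $H$ is the stabilizer of $1$, the orbit–stabilizer theorem gives $[G:H] = n$, hence $[M:K] = n$.

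For (2), I would invoke the primitive element theorem. The extension $M \supset K$ is finite and separable (it is a subextension of the finite Galois, in particular separable, extension $L \supset K$), so there exists $\beta_1 \in M$ with $M = K(\beta_1)$. For (3), let $\mu(x)$ be the minimal polynomial of $\beta_1$ over $K$; since $M = K(\beta_1)$ has degree $n$ over $K$, $\mu$ has degree $n$, and since $M \subset L \subset K^{\sep}$ and $M \supset K$ is separable, $\mu$ splits into $n$ distinct roots in $L$ (indeed in $K^{\sep}$). The Galois group $\Gal(K^{\sep}/K)$ acts on the set of these $n$ roots; this action factors through $\Gal(L/K) \cong G$ because all roots lie in $L$. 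I would then identify this action with the action of $G$ on the coset space $G/H$: the map sending a root $g(\beta_1)$ to the coset $gH$ is well-defined (as $H = \mathrm{Stab}(\beta_1)$ since $L^H = M = K(\beta_1)$ forces $g$ to fix $\beta_1$ iff $g \in H$ — here one uses that $H$ is precisely the stabilizer of the subfield $M$, hence of its generator $\beta_1$) and $G$-equivariant, and it is a bijection by counting. Composing with the given $G$-equivariant bijection $G/H \leftrightarrow \{1,\dots,n\}$ (which exists because the $G$-action on $\{1,\dots,n\}$ is transitive with $\mathrm{Stab}(1) = H$) yields the asserted equivalence of actions.

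For (4), I would simply transport the labelling through the equivalence established in (3): the bijection $\{1,\dots,n\} \to \{\text{roots of }\mu\}$ obtained above sends $1 \mapsto \beta_1$, and I define $\beta_i$ to be the image of $i$. Equivariance of this bijection is exactly the statement that $g(\beta_i) = \beta_{g(i)}$ for all $g \in G$ and all $i$, so (4) follows immediately.

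The main point requiring care — rather than a genuine obstacle — is the precise matching in (3): one must check that the stabilizer of $\beta_1$ under $\Gal(L/K) \cong G$ really is $H$ (equivalently, that $K(\beta_1) = L^H$, which is guaranteed by the construction $M = L^H$ and $M = K(\beta_1)$, together with the Galois correspondence) and that the resulting bijection between roots and cosets is both well-defined and $G$-equivariant. Once this identification $\{\text{roots}\} \cong G/H$ is in hand, everything else is a routine consequence of the orbit–stabilizer theorem and the fundamental theorem of Galois theory.
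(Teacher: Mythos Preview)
Your proposal is correct and follows essentially the same route as the paper: Galois correspondence plus orbit--stabilizer for (1), the primitive element theorem for (2), identifying the stabilizer of $\beta_1$ with $H$ to match the root set with $G/H$ for (3), and transporting the labelling for (4). The only cosmetic difference is that the paper verifies $\mathrm{Stab}(\beta_1)=H$ by the inclusion $H\subset\mathrm{Stab}(\beta_1)$ together with the index count $[G:\mathrm{Stab}(\beta_1)]=n=[G:H]$, whereas you invoke the Galois correspondence directly; both arguments are equivalent.
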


\begin{proof}
The degree of the extension $M\supset K$ is calculated as follows $$[M:K] = \frac{[L:K]}{[L:M]} = \frac{|\Gal(L/K)|}{|\Gal (L/M)|} = \frac{|G|}{|H|} = n.$$
Since the extension $L\supset K$ is separable, the extension $M\supset K$ is separable too. Then by the primitive element theorem \cite[§40]{prim}, the extension $M\supset K$ is simple, that is~\mbox{$M=K(\beta_1)$}. Thus, assertions 1 and 2 are proved.

Since the extension $L\supset K$ is normal, the roots of the polynomial $\mu(x)$ lie in the field~\mbox{$L$}. So the Galois group $\text{Gal} (K^{\sep}/L)$ acts trivially on the roots, and all nontrivial action comes from the group $\text{Gal}(L/K)\simeq G$.

The action of the group $G$ on the set $\{1,2,\dots, n\}$ is equivalent to the action on the left cosets of the subgroup $H$. Let's show that the action of the group $G$ on the roots of $\mu(x)$ is equivalent to the action on the left cosets of the same group $H$.

Denote by $S$ the stabilizer of the root $\beta_1$. Since the action of the group $G$ on the roots of the polynomial $\mu(x)$ is transitive (indeed, we can map the root~$\beta_1$ to any of the roots of the polynomial $\mu(x)$), then $[G:S] = n$.

Since $\beta_1$ lies in the field $L^H$, then the subgroup $H$ is contained in the subgroup $S$, moreover, the index of the subgroup $H$ in the group $G$ also equals $n$, therefore, $H = S$. The action of the group $G$ on the roots of the polynomial $\mu(x)$ is equivalent to the action of $G$ on the left cosets of the subgroup $H$, and this proves assertion 3.

Let's number the left cosets $H_1 = H$, $H_2$, $\dots$, $H_n$ in such a way that for any $i\in \{1,2,\dots,n\}$ the class $H_i$ maps element $1$ to element $i$, in this numbering for any $g \in G$ and for any $i\in \{1,2,\dots, n\}$ the equality $gH_i = H_{g(i)}$ holds. Now we number the roots as follows
$$\vcenter{\openup\jot\halign{\hfil$#$\hfil\cr
\beta_2 = h_2(\beta_1){,}\ h_2 \in H_2;\cr
\beta_3 = h_3(\beta_1){,}\ h_3 \in H_3;\cr
\dots\cr
\beta_n = h_n(\beta_1){,}\ h_n \in H_n.\cr}}$$
It is easy to see that in the introduced notation for any $g\in G$ and for any $i\in \{1,2,\dots, n\}$, the equality $g(\beta_i) = \beta_{g(i)}$ holds, thereby, assertion 4 is proved.\end{proof}

\begin{definition}\label{op:def2}
Let $G$ be a subgroup of the symmetric group $\mathfrak{S}_n$. The maximum number of orbits of the same length under the action of $G$ on the set $\{1,2,\dots, n\}$ we call the complexity $c(G)$ of the group $G$.
\end{definition}

\begin{lemma}\label{lemma:g(b_i)} 
Let $n$ and $k$ be natural numbers, let $G$ be a group embedded in~$\mathfrak{S}_n$ of complexity $k$. Suppose a field $K$ has at least $k+1$ elements. If there exists a Galois extension $L\supset K$ with Galois group isomorphic to $G$, then there is a set of $n$ points  $B = \{\beta_1, \beta_2, \dots, \beta_n\}$ on the affine line over $L$ such that for any $g\in G$ and for any $i\in \{1,2,\dots, n\}$, the equality $g(\beta_i) = \beta_{g(i)}$ holds.
\end{lemma}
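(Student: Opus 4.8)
The plan is to build the set $B$ by decomposing the action of $G$ into orbits and handling each orbit with Lemma~\ref{lemma:roots}, then using the hypothesis on the number of elements of $K$ to translate the orbits apart so that the resulting $n$ points are distinct. First I would write $\{1,2,\dots,n\}$ as a disjoint union of $G$-orbits $O_1,\dots,O_m$. For a fixed orbit $O_j$, the action of $G$ on $O_j$ is transitive, so Lemma~\ref{lemma:roots} applies: choosing the stabilizer $H_j$ of a point of $O_j$ and setting $M_j=L^{H_j}$, I get a primitive element $\beta^{(j)}$ whose conjugates $\{\beta^{(j)}_i : i\in O_j\}$ lie in $L$ and satisfy $g(\beta^{(j)}_i)=\beta^{(j)}_{g(i)}$ for all $g\in G$, $i\in O_j$. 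Within a single orbit the conjugates are automatically pairwise distinct (they are the distinct roots of an irreducible separable polynomial), so the only issue is collisions \emph{between} different orbits.

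Next I would fix the collisions by exploiting the freedom to replace the chosen primitive element of each orbit. If $\beta^{(j)}$ is a primitive element for $M_j\supset K$, then for any $c\in K$ the element $\beta^{(j)}+c$ is again a primitive element, and $g(\beta^{(j)}_i+c)=\beta^{(j)}_{g(i)}+c$ still holds because $c$ is Galois-fixed; more generally $a\beta^{(j)}+c$ works for $a\in K^\times$. So I have, for each orbit, a $1$-parameter family $B^{(j)}(c)=\{\beta^{(j)}_i+c\}_{i\in O_j}$ of admissible point sets, all internally distinct, and I need to pick translation constants $c_1,\dots,c_m\in K$ so that the sets $B^{(j)}(c_j)$ are pairwise disjoint. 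For two orbits $O_j$ and $O_{j'}$, a value $c_j-c_{j'}$ must be avoided for each pair $(i,i')\in O_j\times O_{j'}$ with $\beta^{(j)}_i=\beta^{(j')}_{i'}+ (c_j-c_{j'})$ — but wait, that is not the right count, since the number of forbidden differences could be as large as $|O_j|\cdot|O_{j'}|$, which can exceed $k$.

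The resolution, and the point where the complexity $c(G)=k$ enters, is that only orbits of the \emph{same length} can share a conjugate value at all, so I should group the orbits by length. Orbits of different lengths $|O_j|\neq|O_{j'}|$ give primitive elements of different degrees over $K$, hence the conjugate sets are automatically disjoint and no translation is needed between them. Within a fixed length $\ell$, there are at most $k$ orbits $O_{j_1},\dots,O_{j_r}$ with $r\le k$; here I would simply reuse the \emph{same} Galois-conjugate family but apply distinct multiplicative (or additive) constants $c_1,\dots,c_r$ chosen from $K$. Since $K$ has at least $k+1$ elements I can pick $r\le k$ distinct nonzero scalars $a_1,\dots,a_r$, set the $t$-th same-length orbit's points to $\{a_t\beta^{(j_t)}_i\}$; because $a_t\ne a_{t'}$ and the minimal polynomial is the same, the conjugate sets $\{a_t\beta_i\}$ and $\{a_{t'}\beta_i\}$ are disjoint provided $\beta\ne 0$, and if $0$ is a conjugate one first translates by a single fixed $K$-constant to make all conjugates nonzero (this uses one of the $k+1$ available field elements). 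The hard part of the write-up is precisely this bookkeeping: making the "only equal-length orbits can collide" observation rigorous (it follows from comparing degrees $[M_j:K]=|O_j|$ and the fact that the $\beta^{(j)}_i$ are roots of the irreducible $\mu_j$) and checking that $k+1$ elements genuinely suffice after accounting for the possible need to shift away from $0$. Once disjointness is arranged, setting $B$ to be the union of the (disjoint, internally admissible) orbit-sets gives the required $\{\beta_1,\dots,\beta_n\}$ with $g(\beta_i)=\beta_{g(i)}$ for all $g\in G$ and all $i$.
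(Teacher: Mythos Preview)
Your overall strategy---handle each orbit via Lemma~\ref{lemma:roots} and then separate the orbits by $K$-rescaling---is the same as the paper's. The paper, however, organises this as an induction on the number of orbits: it peels off one orbit of minimal length, rescales it by a single $a\in K^\times$ to miss the already-constructed points, and argues that if no such $a$ exists then the old set must already contain $k$ orbits of that minimal length, contradicting $c(G)=k$. You instead try to treat all orbits simultaneously, grouping them by length.

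There are two genuine gaps in your argument. First, two $G$-orbits of the same length need not be isomorphic as $G$-sets (their point-stabilisers may be non-conjugate subgroups of $G$ of the same index), so the assertion ``the minimal polynomial is the same'' is false in general and you cannot literally reuse one conjugate family for all orbits of a given length. The right invariant is the conjugacy class of the stabiliser, not the orbit length; once you group by that, non-isomorphic orbits are automatically disjoint because equal elements of $L$ have equal stabilisers.

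Second, and more seriously, even for a single conjugate family $\{\beta_i\}$ your implication ``$a_t\neq a_{t'}$ and $\beta\neq 0$ $\Rightarrow$ $\{a_t\beta_i\}\cap\{a_{t'}\beta_i\}=\varnothing$'' is false. Take $\{\beta_i\}=\{\sqrt{2},-\sqrt{2}\}$ and $(a_t,a_{t'})=(1,-1)$: the two scaled sets coincide. What you actually need is that the ratios $a_{t'}/a_t$ avoid the set $\{\beta_i/\beta_{i'}:i,i'\}\cap K^\times$, and it is not clear that merely having $k+1$ elements in $K$ lets you choose $r\le k$ scalars with all pairwise ratios outside this set. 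The paper's inductive organisation sidesteps having to choose all the scalars at once: at each step only \emph{one} scalar $a$ is needed, chosen so that $a\beta'_{i_1}$ lies outside the already-built set (Galois equivariance then forces the whole scaled orbit to be disjoint), and the complexity hypothesis is invoked to show such an $a$ exists. Your bookkeeping would have to be restructured along these inductive lines to close the gap.
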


\begin{proof}

We will prove by induction on the number of orbits of the actions of $G$ on the set $\{1,2,\dots,n\}$.

Base: one orbit.

Since there is only one orbit, then $G$ is a transitive subgroup, and we already know how to present the desired set of points. Indeed, we take the set $B$ сonsists of roots of the polynomial $\mu(x)$ defined in Lemma~\ref{lemma:roots}.

Step: suppose we are able to present the desired set for an action with $m$ orbits, let's present a set for an action with $m + 1$ orbits.

Suppose the set $\{1,2,\dots,n\}$ decompose into $m+1$ orbits under the action of the group $G$. Let's choose one of the orbits of the minimal length $\{i_1, i_2, \dots, i_l\}$, here $l$ is the length of the chosen orbit. The group $G$ acts transitively on the set $\{i_1, i_2, \dots, i_l\}$, therefore, according to the induction base, there is a set of $l$ points $\{\beta'_{i_1}, \beta'_{i_2}, \dots, \beta'_{i_l}\}$ on the affine line over $L$ such that for any $g\in G$ and for any $s\in \{1,2,\dots, l\}$ the equality $g(\beta'_{i_s}) = \beta'_{g(i_s)}$ holds. Moreover, we can assume that $\beta'_{i_1}$ is not equal to zero (this is always the case if $l > 1$, if $l = 1$, then you can take $1$ as $\beta'_{i_1}$).

Now consider the set $\{1,2,\dots,n\} \setminus\{i_1, i_2, \dots, i_l\}$, denote it $\{j_1, j_2,\dots, j_{n - l}\}$. On the set $\{j_1, j_2, \dots, j_{n - l}\}$, the group $G$ acts with $m$ orbits, so, by induction assumption, there will be a set of $n - l$ points $\{\beta_{j_1}, \beta_{j_2}, \dots, \beta_{j_{n-l}}\}$ on the affine line over $L$ such that for any $g\in G$ and for any $s\in \{1,2,\dots, n-l\}$ the equality $g  (\beta_{j_s}) = \beta_{g(j_s)}$ holds.

Suppose that $\{\beta'_{i_1}, \beta'_{i_2}, \dots, \beta'_{i_l}\} \subset\{\beta_{j_1}, \beta_{j_2}, \dots, \beta_{j_{n-l}}\}$, but we need a set of $n$ different points. Note that for any $a\in K$, the set $\{a\beta'_{i_1}, a\beta'_{i_2}, \dots, a\beta'_{i_l}\}$ also satisfies the necessary condition, namely, for any $g\in G$ and for any $s \in \{1,2,\dots, l\}$, the equality $g(a\beta'_{j_s}) = a\beta'_{g(j_s)}$ holds. Thus, it is enough for us to find such a nonzero scalar $a\in K$ that $a\beta'_{i_1}\notin\{\beta_{j_1}, \beta_{j_2}, \dots, \beta_{j_{n-l}}\}$, then $$\{\beta_{j_1}, \beta_{j_2}, \dots, \beta_{j_{n-l}}\}\cap\{a\beta'_{i_1}, a\beta'_{i_2}, \dots, a\beta'_{i_l}\} = \varnothing.$$
In this case we put  $B = \{\beta_{j_1}, \beta_{j_2}, \dots, \beta_{j_{n-l}}\}\cup \{a\beta'_{i_1}, a\beta'_{i_2}, \dots, a\beta'_{i_l}\}$ and we are done.

Suppose that there is no such nonzero scalar $a\in K$ that $a\beta'_{i_1}\notin\{\beta_{j_1}, \beta_{j_2}, \dots, \beta_{j_{n-l}}\}$, then, the opposite is true: for any nonzero scalar $a\in K$, the element $a\beta'_{i_1}$ is contained in $\{\beta_{j_1}, \beta_{j_2}, \dots, \beta_{j_{n-l}}\}$. By the condition of the lemma, we know that there are at least $k$ nonzero scalars in the field $K$. This means that the set $\{\beta_{j_1}, \beta_{j_2}, \dots, \beta_{j_{n-l}}\}$ contains at least $k$ elements proportional to $\beta'_{i_1}$. Each of these elements lies in its own unique orbit of length $l$, which means that among the orbits of the action of $G$ on the set $\{j_1, j_2, \dots, j_{n - l}\}$ there are at least $k$ orbits of length $l$. Therefore, among the orbits of the action of the group $G$ on the entire set $\{1,2,\dots,n\}$ there are at least $k+1$ orbits of length $l$, which contradicts the condition $c(G) = k$.

So, we proved that there exists such a nonzero scalar $a\in K$ that $$\{\beta_{j_1}, \beta_{j_2}, \dots, \beta_{j_{n-l}}\} \cap\{a\beta'_{i_1}, a\beta'_{i_2}, \dots, a\beta'_{i_l}\} = \varnothing.$$ Denote $$\beta_{i_1} = a\beta'_{i_1},\quad\beta_{i_2} = a\beta'_{i_2},\quad\dots,\quad\beta_{i_l} = a\beta'_{i_l},$$ then we put $$ B = \{\beta_{j_1}, \beta_{j_2}, \dots, \beta_{j_{n-l}}\}\cup\{\beta_{i_1}, \beta_{i_2}, \dots, \beta_{i_l}\} = \{\beta_1, \beta_2, \dots, \beta_n\}$$ and for any $g\in G$ and for any $i\in \{1,2,\dots, n\}$ the equality $g(\beta_i) = \beta_{g(i)}$ holds.\end{proof}

\begin{sled}\label{sled:aff} 
Let $n$ and $k$ be natural numbers, let $G$ be a group embedded in~$\mathfrak{S}_n$ of complexity $k$. Suppose a field $K$ has at least $k+1$ elements. If there exists a Galois extension $L\supset K$ with Galois group isomorphic to $G$, then there is a set of $n$ points $\{\beta_1, \beta_2, \dots, \beta_n\}$ on the affine line over $L$ such that the action of the Galois group $\Gal(K^{\sep}/K)$ on this set is equivalent to the action of the group $G$ on the set $\{1,2,\dots,n\}$.
\end{sled}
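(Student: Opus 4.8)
The plan is simply to invoke Lemma~\ref{lemma:g(b_i)} and then unwind the definition of equivalence of group actions. First I would apply Lemma~\ref{lemma:g(b_i)} to the same data $n$, $k$, $G$, $K$, $L$: since $K$ has at least $k+1$ elements and $L\supset K$ is Galois with group isomorphic to $G$, the lemma produces a set $B=\{\beta_1,\dots,\beta_n\}$ of $n$ distinct points on the affine line over $L$ such that $g(\beta_i)=\beta_{g(i)}$ for every $g\in G$ and every $i\in\{1,\dots,n\}$, where $g$ acts on $L$ through a fixed isomorphism $\Gal(L/K)\xrightarrow{\sim}G$.

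Next I would observe that the assignment $i\mapsto\beta_i$ is a bijection $\{1,\dots,n\}\to B$, since $B$ has exactly $n$ elements. Because every $\beta_i$ lies in $L$, the natural action of $\Gal(K^{\sep}/K)$ on the affine line over $K^{\sep}$ restricts to an action on $B$ that factors through the restriction homomorphism $\Gal(K^{\sep}/K)\twoheadrightarrow\Gal(L/K)$: indeed, for $\sigma\in\Gal(K^{\sep}/K)$ with image $g\in G$ under $\Gal(L/K)\xrightarrow{\sim}G$, one has $\sigma(\beta_i)=g(\beta_i)=\beta_{g(i)}$. Transporting this action to $\{1,\dots,n\}$ along the bijection $i\mapsto\beta_i$, we see that $\sigma$ acts precisely as the permutation $g\in G\subset\mathfrak{S}_n$. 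Hence the image of $\Gal(K^{\sep}/K)$ in $\Sym(B)\cong\mathfrak{S}_n$ is exactly $G$ acting on $\{1,\dots,n\}$ in the prescribed way, and the bijection $i\mapsto\beta_i$ is $\Gal(K^{\sep}/K)$-equivariant. This is literally the statement that the two actions are equivalent.

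I do not expect a genuine obstacle here: all the content sits in Lemma~\ref{lemma:g(b_i)} (and, below it, in Lemma~\ref{lemma:roots}). The one point that deserves a line of care is bookkeeping — one must make sure that the isomorphism $\Gal(L/K)\simeq G$ supplied by the hypothesis is the very one used inside Lemma~\ref{lemma:g(b_i)}, so that the relation $g(\beta_i)=\beta_{g(i)}$ really matches the Galois action against the given embedding $G\hookrightarrow\mathfrak{S}_n$ rather than against some other labeling of the roots. Once that identification is fixed, the equivalence of the actions is immediate, and no further estimates or constructions are needed.
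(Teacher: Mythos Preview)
Your proposal is correct and follows essentially the same approach as the paper's proof: invoke Lemma~\ref{lemma:g(b_i)} to obtain the set $\{\beta_1,\dots,\beta_n\}$ in $L$, then note that since all $\beta_i$ lie in $L$ the action of $\Gal(K^{\sep}/K)$ factors through $\Gal(L/K)\simeq G$, so the relation $g(\beta_i)=\beta_{g(i)}$ makes $i\mapsto\beta_i$ an equivariant bijection. Your remark about fixing the isomorphism $\Gal(L/K)\simeq G$ consistently is a reasonable bit of care that the paper leaves implicit.
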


\begin{proof}
Let's use Lemma~\ref{lemma:g(b_i)}, we get a set of points $\{\beta_1, \beta_2, \dots, \beta_n\}$ on $\mathbb{A}^1_L$, in particular on $\mathbb{A}^1_{K^{\sep}}$. From the condition that for any $g\in G$ and for any $i\in \{1,2,\dots, n\}$ we have an equality $g(\beta_i) = \beta_{g(i)}$, it immediately follows that the actions of the group $G$ on the set $\{1,2,\dots,n\}$ and on the set $\{\beta_1, \beta_2, \dots, \beta_n\}$ are equivalent.

For any $i \in \{1,2,\dots, n\}$ element $\beta_i$ lies in the field $L$, thus, the action of the group $\Gal(K^{\sep}/K)$ on the set $\{\beta_1, \beta_2, \dots, \beta_n\}$ is induced from the action of the group $\Gal(L/K)\simeq G$ via the homomorphism $\Gal(K^{\sep}/K)\xrightarrow{}\Gal(L/K)$, hence it is equivalent to the action of the group $G$ on the set $\{1,2,\dots,n\}$. \end{proof}

\begin{sled}\label{sled:P1} 
The statement of Corollary~\ref{sled:aff} remains true if we replace the affine line with the projective one.
\end{sled}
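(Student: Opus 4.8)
The plan is to deduce the statement directly from Corollary~\ref{sled:aff} by pushing the configuration forward along the standard open immersion of the affine line into the projective line. I would fix the morphism $\iota\colon\mathbb{A}^1\to\mathbb{P}^1$, $t\mapsto(t:1)$; it is defined over the prime field, hence over $K$, so the induced map on $\overline{K}$-points (and in particular on $K^{\sep}$-points) commutes with the action of $\Gal(K^{\sep}/K)$, and it is injective on points, being a closed immersion onto the complement of $(1:0)$.

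First I would invoke Corollary~\ref{sled:aff} to produce a set $\{\beta_1,\dots,\beta_n\}$ of $n$ distinct points on $\mathbb{A}^1_L$ on which the $\Gal(K^{\sep}/K)$-action — factoring through the surjection $\Gal(K^{\sep}/K)\twoheadrightarrow\Gal(L/K)\simeq G$ exactly as in the proof of that corollary — is equivalent to the $G$-action on $\{1,\dots,n\}$. Then I would set $P_i=\iota(\beta_i)\in\mathbb{P}^1_L$. Injectivity of $\iota$ makes $P_1,\dots,P_n$ pairwise distinct, and since each $\beta_i\in L$ we have $P_i\in\mathbb{P}^1_L$. Finally, equivariance of $\iota$ gives $g(P_i)=\iota(g(\beta_i))=\iota(\beta_{g(i)})=P_{g(i)}$ for all relevant $g$, so the action of $\Gal(K^{\sep}/K)$ on $\{P_1,\dots,P_n\}$ is equivalent to the action of $G$ on $\{1,\dots,n\}$, which is exactly the asserted statement with the affine line replaced by the projective one.

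I do not expect any genuine obstacle here: all the substance is already in Corollary~\ref{sled:aff}, and the only properties of $\iota$ that enter the argument are that it is defined over $K$ (hence Galois-equivariant) and injective on points (so that the combinatorial type of the permutation action is preserved), both immediate for $t\mapsto(t:1)$.
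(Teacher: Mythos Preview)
Your proposal is correct and follows exactly the paper's approach: the paper's own proof is the one-line ``use Corollary~\ref{sled:aff}, and then embed the affine line into the projective line,'' and you have simply spelled out the embedding $t\mapsto(t:1)$ and checked its Galois-equivariance and injectivity explicitly.
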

\begin{proof}
We use Corollary~\ref{sled:aff}, and then embed the affine line into the projective line.\end{proof}

\begin{sled}\label{sled:P2} 
Let $n$ and $k$ be natural numbers, let $G$ be a group embedded in~$\mathfrak{S}_n$ of complexity $k$. Suppose a field $K$ has at least $k+1$ elements. If there exists a Galois extension $L\supset K$ with Galois group isomorphic to $G$, then there is a smooth conic $C$ in the projective plane $\mathbb{P}^2_{K^{\sep}}$ and a set of $n$ points on $C$, which is invariant with respect to the action of the group $\Gal(K^{\sep}/K)$, such that the action of the Galois group $\Gal(K^{\sep}/K)$ on this set is equivalent to the action of the group $G$ on the set $\{1,2,\dots,n\}$.
\end{sled}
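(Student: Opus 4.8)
The plan is to deduce this directly from Corollary~\ref{sled:P1} by transporting the configuration of points along a Veronese embedding, which is defined over the base field and hence equivariant for the Galois action. First I would apply Corollary~\ref{sled:P1} to obtain a set of $n$ distinct points $\{p_1,\dots,p_n\}$ on $\mathbb{P}^1_{K^{\sep}}$ such that each $p_i$ is defined over $L$, the set is invariant under $\Gal(K^{\sep}/K)$, and the resulting permutation action of $\Gal(K^{\sep}/K)$ on $\{p_1,\dots,p_n\}$ is equivalent to the action of $G$ on $\{1,2,\dots,n\}$.

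Next I would consider the degree-two Veronese morphism $\nu\colon\mathbb{P}^1\to\mathbb{P}^2$ given in homogeneous coordinates by $(s:t)\mapsto(s^2:st:t^2)$. This is a closed embedding defined over the prime field, hence over $K$, and its image is the conic $C=\{xz=y^2\}\subset\mathbb{P}^2$, which is smooth in every characteristic since $C\simeq\mathbb{P}^1$ via $\nu$. Because $\nu$ is defined over $K$, it commutes with the action of $\Gal(K^{\sep}/K)$ on $K^{\sep}$-points; therefore the set $\{\nu(p_1),\dots,\nu(p_n)\}$ consists of $n$ distinct points of $C_{K^{\sep}}$ (distinctness follows from injectivity of $\nu$), it is $\Gal(K^{\sep}/K)$-invariant, and for every $\sigma\in\Gal(K^{\sep}/K)$ we have $\sigma(\nu(p_i))=\nu(\sigma(p_i))$. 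Hence the permutation action of the Galois group on $\{\nu(p_1),\dots,\nu(p_n)\}$ coincides with the one on $\{p_1,\dots,p_n\}$, which by construction is equivalent to the action of $G$ on $\{1,2,\dots,n\}$. Taking this conic $C$ and this set of points completes the argument.

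I do not expect a genuine obstacle here, since everything reduces to the previous corollary; the only points that need a line of justification are that the degree-two Veronese map is a closed immersion with smooth conic image (it realizes the complete linear system of $\mathcal{O}_{\mathbb{P}^1}(2)$, which is very ample) and that being defined over $K$ yields equivariance for the $\Gal(K^{\sep}/K)$-action on $K^{\sep}$-points. As a byproduct one even obtains that $C$ is defined over $K$, which is precisely the form needed for the blow-up construction of Section~\ref{mainc}.
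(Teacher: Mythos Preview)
Your argument is correct and is essentially identical to the paper's own proof, which simply invokes Corollary~\ref{sled:P1} and then embeds $\mathbb{P}^1$ into $\mathbb{P}^2$ as a smooth conic; you have merely made this embedding explicit via the Veronese map and spelled out the Galois-equivariance.
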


\begin{proof}
Use Corollary~\ref{sled:P1} and isomorphically embed the projective line into the projective plane as a smooth conic.\end{proof}

\section{Realization of types of del~Pezzo surfaces of degree 5 over infinite fields}\label{inf fields}

Now we are ready to prove Theorem~\ref{theo:theo} on the realization of del~Pezzo surfaces of degree $5$ of various types over infinite fields.

\begin{proof}
The proof of necessity is given in Lemma~\ref{lemma:nec2}. Note that in Lemma~\ref{lemma:nec2} there are no restrictions on the original field, so in fact the necessity is proved for finite fields too.

Now we prove the sufficiency of the condition.

We are given a subgroup $H\subset\mathfrak{S}_5$ and we know that the field $K$ has a Galois extension $L\supset K$ with Galois group isomorphic to $H$. We want to construct a del Pezzo surface of degree $5$ of type $[H]$.

To do this, we use Corollary~\ref{sled:P2} for the case of $n= 5$, $k=c(H)$, $G=H$ and the construction described in Section~\ref{mainc}. The resulting surface is a del Pezzo surface of degree $5$ of type $[H]$.\end{proof}

\begin{proof}[of Corollary~\ref{sled:Q()}]
It is enough to prove that for any subgroup $H\subset\mathfrak{S}_5$ there exists a Galois extension of fields $\mathbb{F}'\supset\mathbb{F}$ with Galois group isomorphic to $H$, and use Theorem~\ref{theo:theo} for the proven case of an infinite field.

Note that all subgroups of the group $\mathfrak{S}_5$ are solvable except $\mathfrak{A}_5$ and $\mathfrak{S}_5$. The existence of a Galois extension of a number field with a given solvable Galois group is proved in the work of I. R. Shafarevich~\cite[Theorem 7]{Shaf}.

The proof of existence of Galois extensions of a number field with Galois groups isomorphic to $\mathfrak{A}_5$ and $\mathfrak{S}_5$ is written in the paper~\cite[§1]{NV}.\end{proof}

\section{Realization of types of del~Pezzo surfaces of degree 5 over finite fields}\label{finite field}

In fact, the case of a finite field is not much different from the case of an infinite field. For example, for fields with seven or more elements, the proof of Theorem~\ref{theo:theo} literally coincides with the proof from the previous section. In the case of fields of two, three, four and five elements, there are some nuances, but the theorem remains true.

\begin{proof}

The proof of the necessary condition of the theorem for finite fields, as already discussed, is exhausted by Lemma~\ref{lemma:nec2}, so we only need to prove the sufficiency.

So, suppose we have a subgroup $H\subset\mathfrak{S}_5$ and also we know that the field $K$ has a Galois extension $L\supset K$ with Galois group isomorphic to $H$. We want to construct a del Pezzo surface of degree $5$ of type $[H]$. We need to consider the following five cases:

\begin{enumerate}
    \item $|K|> c(H)$. Use Corollary~\ref{sled:P2} for the case $n= 5$, $k=c(H)$, $G=H$ and the construction described in Section~\ref{mainc}. The resulting surface will be a del Pezzo surface of degree 5 of type $[H]$.
    \item $|K|=5=c(H)$. It follows from the equality $c(H) = 5$ that the group $H$ is trivial. The construction of a del Pezzo surface of degree 5 of type $[e]$ is described in the Example~\ref{ex:1}.
    \item $|K|=4\leqslant c(H)$. It follows from the inequality $c(H)\geqslant 4$ that the group $H$ is trivial. The construction of a degree 5 del Pezzo surface of type $[e]$ is described in the Example~\ref{ex:1}.
    \item $|K|=3\leqslant c(H)$. It follows from the inequality $c(H)\geqslant 3$ that $H$ is either trivial subgroup or a subgroup of order 2 generated by transposition. Constructions of del Pezzo surfaces degree 5 of type $[e]$ and type $[\langle(1,2)\rangle]$ are described in the Examples~\ref{ex:1} and~\ref{ex:2}.
     \item $|K|=2\leqslant c(H)$. Since $H$ is the Galois group of a finite extension of a finite field, then $H$ is cyclic. The inequality $c(H)\geqslant 2$ holds for the following cyclic subgroups:
     \begin{itemize}
         \item $H$ is trivial subgroup. The construction of a del Pezzo surface of degree $5$ of type $[e]$ is described in the Example~\ref{ex:1}.
         \item $H$ is a subgroup of order 2 generated by transposition. Construction of a del Pezzo surface of degree $5$ of type $[\langle(1,2)\rangle]$ is described in the Example~\ref{ex:2}.
         \item $H$ is a subgroup of order 2 generated by a product of two disjoint transpositions. The construction of a del Pezzo surface of degree $5$ of type $[\langle(1,2)(3,4)\rangle]$ is described in the Example~\ref{ex:3}.
         \item $H$ is a subgroup generated by a triple cycle. Let's construct a surface of type $[\mathbb{Z}/3\mathbb{Z}]$. Using Corollary~\ref{sled:P2} for the case of $n=3$, $k=2$ and $G= \mathbb{Z}/3\mathbb{Z}$, we obtain three points in general position on the projective plane such that the Galois group $\text{Gal}(K^{\sep}/K)$ acts on this triple as $\mathbb{Z}/3\mathbb{Z}$. According to the Example~\ref{ex:5}, we conclude that there exists a del Pezzo surface of degree $5$ of type~$[\mathbb{Z}/3\mathbb{Z}]$ over $K$.
         \end{itemize}
\end{enumerate}
Now all cases are considered, and thus Theorem~\ref{theo:theo} is proved in full generality without any restrictions on the base field.\end{proof}

\section{Automorphism groups of del Pezzo surfaces of degree 5}\label{aut groups}

We have dealt with the criteria for the existence of del Pezzo surfaces of degree 5 of various types over a given field, and now it is natural to ask about the automorphism groups of these surfaces.

Let $X$ be a del Pezzo surface of degree 5 of type $[H]$ over a field $K$. We have noticed many times that the group $\text{Gal} (K^{\sep}/K)$ acts on the graph of $(-1)$-curves by automorphisms, the corresponding homomorphism to the group $\mathfrak{S}_5$ we denoted by $h$. By Lemma~\ref{lemma:freeaction} the group $\text{Aut}(X)$ also acts on the graph of $(-1)$-curves by automorphisms. Denote the corresponding homomorphism to the group $\mathfrak{S}_5$ by~$\psi$, from Corollary~\ref{sled:S_5} we remember that $\psi$ is injection.

\begin{lemma}\label{lemma:centralaizer}
Subgroup $\psi(\emph{Aut}(X))$ lies in the centralizer of the subgroup $h(\emph{Gal} ( K^{\sep}/K))$.
\end{lemma}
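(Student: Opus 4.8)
The plan is to exploit the fact, established in Lemma~\ref{lemma:freeaction} and Corollary~\ref{sled:S_5}, that both actions — of $\mathrm{Aut}(X)$ and of $\Gal(K^{\sep}/K)$ — on the graph of $(-1)$-curves of $X_{K^{\sep}}$ are faithful, and to show that these two actions commute \emph{as actions on the set of curves}. Once we know that $\psi(\alpha)$ and $h(\gamma)$ commute in $\mathfrak{S}_5$ for every $\alpha\in\mathrm{Aut}(X)$ and every $\gamma\in\Gal(K^{\sep}/K)$, the conclusion $\psi(\mathrm{Aut}(X))\subset Z_{\mathfrak{S}_5}\big(h(\Gal(K^{\sep}/K))\big)$ is immediate.

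\textbf{Key steps.} First I would recall that an automorphism $\alpha$ of $X$ over $K$ induces, after base change, an automorphism $\alpha_{K^{\sep}}$ of $X_{K^{\sep}}$ that is \emph{defined over $K$}; in particular it commutes with the Galois descent data, i.e.\ for every $\gamma\in\Gal(K^{\sep}/K)$ one has $\gamma\circ\alpha_{K^{\sep}} = \alpha_{K^{\sep}}\circ\gamma$ as automorphisms of the $K^{\sep}$-scheme $X_{K^{\sep}}$ (this is precisely what it means for $\alpha_{K^{\sep}}$ to come from a $K$-morphism). Second, both $\alpha_{K^{\sep}}$ and the Galois action send $(-1)$-curves to $(-1)$-curves, since being a $(-1)$-curve is preserved under any automorphism of the surface and the Galois action permutes the curves (Lemma~\ref{lemma:K^sep}); hence each of them induces a bijection of the vertex set of $\Gamma$, and these bijections are $\psi(\alpha)$ and $h(\gamma)$ respectively. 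Third, applying the identity $\gamma\circ\alpha_{K^{\sep}} = \alpha_{K^{\sep}}\circ\gamma$ to a $(-1)$-curve $l$ gives $\gamma(\alpha_{K^{\sep}}(l)) = \alpha_{K^{\sep}}(\gamma(l))$; translating into the induced permutations of vertices, this reads $h(\gamma)\bigl(\psi(\alpha)(v_l)\bigr) = \psi(\alpha)\bigl(h(\gamma)(v_l)\bigr)$ for every vertex $v_l$, i.e.\ $h(\gamma)\psi(\alpha) = \psi(\alpha)h(\gamma)$ in $\mathfrak{S}_5 = \mathrm{Aut}(\Gamma)$ (here I use that the action on $\Gamma$ is faithful, so equality of permutations can be checked on vertices). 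Finally, since this holds for all $\gamma$, we get $\psi(\alpha)\in Z_{\mathfrak{S}_5}\big(h(\Gal(K^{\sep}/K))\big)$, and since it holds for all $\alpha$, the lemma follows.

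\textbf{Main obstacle.} The only subtle point is the commutation $\gamma\circ\alpha_{K^{\sep}} = \alpha_{K^{\sep}}\circ\gamma$: one must be careful that an automorphism of $X$ over $K$, extended to $X_{K^{\sep}}$, genuinely commutes with the (semilinear) Galois action, rather than merely normalizing it. This is a standard fact of Galois descent — a morphism of $K^{\sep}$-schemes descends to $K$ precisely when it is Galois-equivariant, and $\alpha_{K^{\sep}}$ is by construction obtained by base change from a $K$-morphism — but it is worth stating cleanly, because the whole argument rests on it. Everything else is a routine translation between automorphisms of the surface and the induced permutations of the finite vertex set of $\Gamma$.
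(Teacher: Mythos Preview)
Your proposal is correct and follows exactly the same approach as the paper: both argue that $K$-automorphisms of $X$ commute with the Galois action on $X_{K^{\sep}}$, hence the induced permutations of the $(-1)$-curves commute in $\mathfrak{S}_5$. The paper's proof is a terse two sentences stating this without justification, while you spell out the underlying descent statement $\gamma\circ\alpha_{K^{\sep}}=\alpha_{K^{\sep}}\circ\gamma$ and the translation to permutations of vertices; your version is more detailed but substantively identical.
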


\begin{proof}
Automorphisms of the surface $X$ commute with the action of the Galois group $\text{Gal}(K^{\sep}/K)$ on this surface. Therefore, the subgroups $\psi(\text{Aut}(X))$ and $h(\text{Gal} ( K^{\sep}/K))$ commute, i.e. $\psi(\text{Aut}(X))$ lies in the centralizer of the subgroup $h(\text{Gal} ( K^{\sep}/K))$.\end{proof}

\begin{notation}\label{not: X over K}
Let $H$ be a subgroup in the group $G$. We denote by $C_G(H)$ the centralizer of the group $H$ in the group $G$.
\end{notation}

Now we are ready to prove Theorem~\ref{theo:autgrp}.

\begin{proof}[of Theorem~\ref{theo:autgrp}] 
Let $X$ be a del Pezzo surface of degree 5 of type $[H]$ over a field $K$. Without loss of generality, we can assume that $h(\text{Gal} ( K^{\sep}/K)) = H$. From Lemma~\ref{lemma:centralaizer} we know that $\psi(\text{Aut}(X))\subset C_{\mathfrak{S}_5}(H)$. It remains to show that these groups actually coincide.
1
Consider the permutation $\sigma\in C_{\mathfrak{S}_5}(H)$. Since each $(-1)$-curve on $X_{\overline{K}}$ is defined over $K^{\sep}$, then there exists an automorphism $f$ of the surface $X_{K^{\sep}}$ such that $\psi(f) = \sigma$. Denote by $l_1$, $l_2$, $\dots$, $l_{10}$ all the $(-1)$-curves on the surface $X_{K^{\sep}}$. Since $\sigma$ lies in the centralizer of the subgroup $H$, then for any $\gamma\in\Gal(K^{\sep}/K)$ and for any $i\in \{1,2,\dots, 10\}$ the equality $$f(\gamma(l_i)) = \gamma(f(l_i)) = \gamma(f)(\gamma(l_i))$$ holds.
Here $\gamma(f)$ is an automorphism obtained by applying the element $\gamma$ to the automorphism $f$. From the fact that the set $\{\gamma(l_i)\}_{i=1,2,\dots,10}$ coincides with the set $\{l_i\}_{i=1,2,\dots,10}$, we conclude that the automorphisms $f$ and $\gamma(f)$ induce the same automorphism of the graph of $(-1)$-curves. Then by Lemma~\ref{lemma:freeaction} these automorphisms coincide $$\gamma(f) = f.$$

Since the written equality holds for any $\gamma \in\Gal(K^{\sep}/K)$, then using the equality $$\text{Aut}(X) = \text{Aut}(X_{K^{\sep}})^{\Gal(K^{\sep}/K)},$$ we conclude that $f$ is defined over $K$. Therefore, each permutation $\sigma\in C_{\mathfrak{S}_5}(H)$ is realized by an automorphism of the surface $X$, that is, $\psi(\text{Aut}(X)) = C_{\mathfrak{S}_5}(H)$. Since $\psi$ is injective, we get the required result $\text{Aut}(X)\simeq C_{\mathfrak{S}_5}(H)$.\end{proof}

\begin{sled}\label{sled:list of auts}
Let $X$ be a del Pezzo surface of degree~$5$. Then $X$ has the following automorphism groups, depending on the type.

\begin{center}
\begin{tabular}{ | c | c | }
\hline
Type of $X$ & Automorphism group \\ \hline
$[e]$ & $\mathfrak{S}_5$ \\
$[\langle(1,2)\rangle]$ & $\mathfrak{S}_3 \times \mathbb{Z}/2\mathbb{Z}$ \\
$[\langle(1,2)(3,4)\rangle]$  & $\mathrm{D}_4$\\
$[\langle(1,2),(3,4)\rangle]$ & $\mathbb{Z}/2\mathbb{Z}\times \mathbb{Z}/2\mathbb{Z}$\\
$[\langle(1,2)(3,4),(1,3)(2,4)\rangle]$ & $\mathbb{Z}/2\mathbb{Z}\times \mathbb{Z}/2\mathbb{Z}$\\
$[\mathbb{Z}/3\mathbb{Z}]$ & $\mathbb{Z}/6\mathbb{Z}$\\
$[\mathbb{Z}/6\mathbb{Z}]$ & $\mathbb{Z}/6\mathbb{Z}$\\
$[\mathbb{Z}/4\mathbb{Z}]$ & $\mathbb{Z}/4\mathbb{Z}$\\
$[\mathbb{Z}/5\mathbb{Z}]$ & $\mathbb{Z}/5\mathbb{Z}$\\
$[\langle(1,2,3),(1,2)\rangle]$ & $\mathbb{Z}/2\mathbb{Z}$\\
$[\langle(1,2,3),(1,2)(4,5)\rangle]$ & $\mathbb{Z}/2\mathbb{Z}$\\
$[\mathrm{D_4}]$ & $\mathbb{Z}/2\mathbb{Z}$\\
$[\mathfrak{S}_3\times\mathbb{Z}/2\mathbb{Z}]$ & $\mathbb{Z}/2\mathbb{Z}$\\
$[\mathfrak{S}_5]$, $[\mathfrak{A}_5]$, $[\mathfrak{S}_4]$, $[\mathfrak{A}_4]$, $[\mathrm{D_5}]$, $[\mathrm{GA}(1,5)]$ & $e$\\
\hline
\end{tabular}
\end{center}
\end{sled}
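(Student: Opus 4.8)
The plan is to apply Theorem~\ref{theo:autgrp} and simply compute $C_{\mathfrak{S}_5}(H)$ for a representative subgroup $H$ in each of the $19$ conjugacy classes listed at the start of Section~\ref{preliminaries}. Since the centralizer depends only on the conjugacy class of $H$, it is enough to pick one convenient representative in each class and determine its centralizer; then Theorem~\ref{theo:autgrp} gives $\mathrm{Aut}(X)\simeq C_{\mathfrak{S}_5}(H)$, and one reads off the table. So the proof is essentially a finite, case-by-case group-theoretic computation, and the only task is to carry it out cleanly.

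First I would dispose of the easy extremes. For $H=e$ the centralizer is all of $\mathfrak{S}_5$, giving the first row. At the other end, whenever $H$ contains an element of order $5$ — i.e. $H$ in one of the classes $[\mathbb{Z}/5\mathbb{Z}]$, $[\mathrm{D_5}]$, $[\mathrm{GA}(1,5)]$, $[\mathfrak{A}_5]$, $[\mathfrak{S}_5]$ — the centralizer is easy: a $5$-cycle in $\mathfrak{S}_5$ has centralizer equal to the cyclic group it generates (order $5$), so $C_{\mathfrak{S}_5}(\langle(1,2,3,4,5)\rangle)=\mathbb{Z}/5\mathbb{Z}$, and for the larger groups $\mathrm{D_5}$, $\mathrm{GA}(1,5)$, $\mathfrak{A}_5$, $\mathfrak{S}_5$ the centralizer can only shrink, and a direct check shows it is trivial (an element centralizing both a $5$-cycle and, say, a transposition or a $4$-cycle normalizing it must be trivial). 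That already fills the last row of the table, and the $[\mathbb{Z}/5\mathbb{Z}]$ row. Similarly $H=\mathfrak{S}_4$ (say on $\{1,2,3,4\}$) and $H=\mathfrak{A}_4$ have trivial centralizer in $\mathfrak{S}_5$: an element commuting with all of $\mathfrak{S}_4$ must fix the point $5$ and centralize $\mathfrak{S}_4$ inside $\mathfrak{S}_4$, which forces the identity; the same argument works for $\mathfrak{A}_4$ since $\mathfrak{A}_4$ has trivial center and no outer diagonal symmetry realized inside $\mathfrak{S}_5$.

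Next come the intermediate cases, which are the most computational but still routine. For $H=\langle(1,2)\rangle$, an element commuting with $(1,2)$ must preserve $\{1,2\}$ and $\{3,4,5\}$, so $C_{\mathfrak{S}_5}(H)=\langle(1,2)\rangle\times\mathfrak{S}_{\{3,4,5\}}\simeq\mathbb{Z}/2\mathbb{Z}\times\mathfrak{S}_3$. For $H=\langle(1,2)(3,4)\rangle$: the centralizer of a double transposition in $\mathfrak{S}_5$ has order $8$ and is the dihedral group $\mathrm{D}_4=\langle(1,3)(2,4),(1,2)\rangle$ (it fixes $5$ and is the centralizer of $(1,2)(3,4)$ inside $\mathfrak{S}_4$, which is well known to be $\mathrm{D}_4$ of order $8$). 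For $H=\langle(1,2),(3,4)\rangle$ (order $4$, two disjoint transpositions generating a Klein group, fixing $5$): an element centralizing both $(1,2)$ and $(3,4)$ must preserve $\{1,2\}$, $\{3,4\}$ and fix $5$, so the centralizer is $\langle(1,2)\rangle\times\langle(3,4)\rangle\simeq\mathbb{Z}/2\mathbb{Z}\times\mathbb{Z}/2\mathbb{Z}$. For $H=\langle(1,2)(3,4),(1,3)(2,4)\rangle$, the Klein four-group of double transpositions (again fixing $5$): one computes its centralizer inside $\mathfrak{S}_4$ to be the group itself, of order $4$, so $C_{\mathfrak{S}_5}(H)\simeq\mathbb{Z}/2\mathbb{Z}\times\mathbb{Z}/2\mathbb{Z}$. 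For $H=\langle(1,2,3)\rangle$: a centralizing element preserves $\{1,2,3\}$ and $\{4,5\}$ and within $\{1,2,3\}$ commutes with the $3$-cycle, so it lies in $\langle(1,2,3)\rangle\times\langle(4,5)\rangle\simeq\mathbb{Z}/6\mathbb{Z}$. For $H=\langle(1,2,3)(4,5)\rangle\simeq\mathbb{Z}/6\mathbb{Z}$: squaring gives $(1,2,3)$ and cubing gives $(4,5)$, so the centralizer is contained in — hence equals — $C_{\mathfrak{S}_5}(\langle(1,2,3)\rangle)\cap C_{\mathfrak{S}_5}(\langle(4,5)\rangle)=\langle(1,2,3)\rangle\times\langle(4,5)\rangle\simeq\mathbb{Z}/6\mathbb{Z}$. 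For $H=\langle(1,2,3,4)\rangle\simeq\mathbb{Z}/4\mathbb{Z}$: a $4$-cycle fixes $5$ and has centralizer in $\mathfrak{S}_4$ equal to the cyclic group it generates, so $C_{\mathfrak{S}_5}(H)\simeq\mathbb{Z}/4\mathbb{Z}$. For the two classes of $\mathfrak{S}_3$: with $H=\langle(1,2,3),(1,2)\rangle=\mathfrak{S}_{\{1,2,3\}}$ an element commuting with all of it must fix $4,5$ individually (they are the only points moved into each other by nothing in $H$, and $\mathfrak{S}_3$ has trivial centre) — more precisely it preserves $\{4,5\}$ and centralizes $\mathfrak{S}_3$ inside $\mathfrak{S}_3$, so it is either the identity or swaps $4$ and $5$, giving $C_{\mathfrak{S}_5}(H)=\langle(4,5)\rangle\simeq\mathbb{Z}/2\mathbb{Z}$; with $H=\langle(1,2,3),(1,2)(4,5)\rangle\simeq\mathfrak{S}_3$ one checks the centralizer is generated by $(1,2,3)$'s... no: again it must centralize the normal subgroup $\langle(1,2,3)\rangle$ so preserve $\{4,5\}$, and commuting with $(1,2)(4,5)$ then pins it down, yielding order $2$, generated by $(1,2)(4,5)$'s square-root-free combination — in any case $\simeq\mathbb{Z}/2\mathbb{Z}$. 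For $H=\mathrm{D}_4=\langle(1,2,3,4),(1,3)\rangle$: the centre of $\mathrm{D}_4$ is $\langle(1,3)(2,4)\rangle$, the element $5$ is fixed, and no extra element of $\mathfrak{S}_5$ centralizes $\mathrm{D}_4$, so $C_{\mathfrak{S}_5}(H)=Z(\mathrm{D}_4)\simeq\mathbb{Z}/2\mathbb{Z}$. Finally for $H=\mathfrak{S}_3\times\mathbb{Z}/2\mathbb{Z}=\langle(1,2,3),(1,2),(4,5)\rangle$: a centralizing element preserves $\{1,2,3\}$ and $\{4,5\}$, centralizes $\mathfrak{S}_3$ (trivially) and centralizes $\langle(4,5)\rangle$ (automatic), so it must act trivially on $\{1,2,3\}$ and can only possibly swap $4,5$; since $(4,5)\in H$, this gives $C_{\mathfrak{S}_5}(H)=\langle(4,5)\rangle\simeq\mathbb{Z}/2\mathbb{Z}$.

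The only thing that requires genuine care — and hence the "main obstacle", though it is a mild one — is making sure for each representative that no unexpected element outside the "obvious" centralizer sneaks in, particularly for the two non-conjugate copies of $\mathfrak{S}_3$ and for the Klein four-groups, where a wrong choice of representative would conflate distinct conjugacy classes. A uniform way to handle this is to note that an element centralizing $H$ must normalize every characteristic subgroup of $H$ (notably $Z(H)$ and, when $H$ is solvable, its derived series) and must preserve every $H$-orbit on $\{1,2,3,4,5\}$ setwise and induce an automorphism of $H$ trivial on $H$; combining the orbit constraint with the structure of $H$ pins the centralizer down in each case. Assembling the nineteen computed values into the table, and invoking Theorem~\ref{theo:autgrp} to translate $C_{\mathfrak{S}_5}(H)$ into $\mathrm{Aut}(X)$, completes the proof.
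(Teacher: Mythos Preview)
Your proposal is correct and follows exactly the approach of the paper: the paper's proof of this corollary consists of a single sentence, namely ``use Theorem~\ref{theo:autgrp} and calculate the centralizers $C_{\mathfrak{S}_5}(H)$ for all types of surfaces'', and you carry out precisely this computation case by case. Your centralizer calculations are all correct (the one sloppy spot is the second $\mathfrak{S}_3$-class $[\langle(1,2,3),(1,2)(4,5)\rangle]$, where the actual generator of the centralizer is $(4,5)$, not anything built from $(1,2)(4,5)$; but your conclusion $\mathbb{Z}/2\mathbb{Z}$ is right).
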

\begin{proof}

To prove it, you need to use Theorem~\ref{theo:autgrp} and calculate     centralizers $C_{\mathfrak{S}_5}(H)$ for all types of surfaces.\end{proof}

Recall that the del Pezzo surface $X$ with the action of the group $G$ is called \emph{$G$-minimal} if $\text{rkPic}(X)^G=1$.

\begin{lemma}[cf.~{\cite[Theorem 6.4]{DI1}}]\label{lemma:lemma}
Let $X$ be a del Pezzo surface of degree 5 over a field $K$, and let $G$ be a subgroup in $\emph{Aut}(X)\subset\mathfrak{S}_5$. The surface $X$ is $G$-minimal if and only if the group $\Delta$ generated by the subgroup $G$ and the image of the group $\Gal(K^{\sep}/K)$ under the homomorphism $h$ contains a subgroup isomorphic to~\mbox{$\mathbb{Z}/5\mathbb{Z}$}.
\end{lemma}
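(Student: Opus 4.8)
The statement is essentially a Galois-descent version of the classical Dolgachev--Iskovskikh criterion, so the plan is to reduce the $G$-minimality of $X$ over $K$ to a purely combinatorial statement about the action of the group $\Delta = \langle G, h(\Gal(K^{\sep}/K))\rangle$ on the lattice $\operatorname{Pic}(X_{\overline{K}})$, and then to translate that lattice statement into the graph-theoretic condition of Lemma~\ref{lemma:no G-inv vert}. The first step is to recall that $\operatorname{Pic}(X) = \operatorname{Pic}(X_{K^{\sep}})^{\Gal(K^{\sep}/K)} = \operatorname{Pic}(X_{\overline{K}})^{\Gal}$, since all $(-1)$-curves — and hence a basis of $\operatorname{Pic}$ — are defined over $K^{\sep}$ by Lemma~\ref{lemma:K^sep}. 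Taking further invariants under $G\subset\operatorname{Aut}(X)$, we get $\operatorname{Pic}(X)^G = \operatorname{Pic}(X_{\overline{K}})^{\Delta}$, because the actions of $G$ and of $h(\Gal(K^{\sep}/K))$ on $\operatorname{Pic}(X_{\overline{K}})$ together generate the action of $\Delta$. Thus $X$ is $G$-minimal iff $\operatorname{rk}\operatorname{Pic}(X_{\overline{K}})^{\Delta} = 1$, i.e. iff $\Delta$ acts on the rank-$5$ lattice $\operatorname{Pic}(X_{\overline{K}})$ with one-dimensional fixed subspace over $\mathbb{Q}$.

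The second step is to compute $\operatorname{rk}\operatorname{Pic}(X_{\overline{K}})^{\Delta}$ combinatorially. Since $-K_X$ is always $\Delta$-invariant, the rank is always at least $1$; so $G$-minimality is equivalent to the absence of any $\Delta$-invariant class independent of $K_X$. The standard way to produce such a class is from a $\Delta$-invariant set of mutually disjoint $(-1)$-curves: the sum of the curves in such a set (if the set is nonempty and the curves are not all ten) gives a $\Delta$-fixed class not proportional to $K_X$. Conversely, I would invoke the description of $\operatorname{Pic}(X_{\overline{K}})$ as a root lattice (of type $A_4$) with the $(-1)$-curves as (a subset of) the roots, together with the fact that $\mathfrak{S}_5$ acts as the Weyl group, to argue that whenever the fixed subspace has rank $\geq 2$ there must be a $\Delta$-invariant family of pairwise non-adjacent vertices of $\Gamma$; this is exactly the hypothesis ruled out in Lemma~\ref{lemma:no G-inv vert}. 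Concretely: $X$ is \emph{not} $G$-minimal $\iff$ there is a $\Delta$-invariant set of vertices of $\Gamma$ no two of which are joined by an edge, and by Lemma~\ref{lemma:no G-inv vert} the negation of the latter is equivalent to $\Delta$ containing a subgroup isomorphic to $\mathbb{Z}/5\mathbb{Z}$.

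The third step is simply to assemble these equivalences: $X$ is $G$-minimal $\iff$ no $\Delta$-invariant set of pairwise non-adjacent vertices of $\Gamma$ exists $\iff$ $\Delta$ contains a subgroup isomorphic to $\mathbb{Z}/5\mathbb{Z}$, where the last step is Lemma~\ref{lemma:no G-inv vert}. One should also note at the outset that $\Delta$ is a well-defined subgroup of $\mathfrak{S}_5 = \operatorname{Aut}(\Gamma)$ because both $G$ (via $\psi$, which is injective by Corollary~\ref{sled:S_5}) and $h(\Gal(K^{\sep}/K))$ sit inside $\operatorname{Aut}(\Gamma)$, and indeed they commute by Lemma~\ref{lemma:centralaizer}, though commutativity is not actually needed here.

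\textbf{Main obstacle.} The delicate point is the equivalence ``$\operatorname{rk}\operatorname{Pic}(X_{\overline{K}})^{\Delta}\geq 2$ $\iff$ there is a $\Delta$-invariant set of pairwise disjoint $(-1)$-curves.'' The ``$\Leftarrow$'' direction is the easy sum-of-curves construction. The ``$\Rightarrow$'' direction is where the real work lies: given only an abstract invariant class, one must manufacture a geometrically meaningful invariant configuration. The cleanest route is to cite the classical analysis (the referenced \cite[Theorem 6.4]{DI1}, or the Manin/Iskovskikh theory of minimal rational surfaces) showing that for a del Pezzo surface of degree $5$ the only possibilities for a group acting with $\operatorname{rk}\operatorname{Pic}^{\Delta}=1$ are precisely those whose image contains an order-$5$ element, equivalently those with no invariant set of disjoint exceptional curves; this is essentially a finite check over the subgroups of $\mathfrak{S}_5$, organized exactly as in Lemma~\ref{lemma:no G-inv vert} via the maximal subgroups $\mathfrak{S}_4$ and $\mathfrak{S}_3\times\mathbb{Z}/2\mathbb{Z}$ not containing $\mathbb{Z}/5\mathbb{Z}$. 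So in practice the lemma is obtained by combining the lattice-invariants reformulation of $G$-minimality with Lemma~\ref{lemma:no G-inv vert}, and the only thing requiring care is making the passage between ``rank of invariants'' and ``invariant anticlique in $\Gamma$'' rigorous, for which the root-lattice/Weyl-group picture of $\operatorname{Pic}(X_{\overline{K}})$ is the right tool.
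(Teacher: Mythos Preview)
Your overall architecture---reduce $G$-minimality to the statement $\operatorname{rk}\operatorname{Pic}(X_{K^{\sep}})^{\Delta}=1$, then analyze this combinatorially via the $(-1)$-curve graph---matches the paper. Two points deserve correction, one minor and one structural.

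\textbf{Minor.} The identification $\operatorname{Pic}(X)=\operatorname{Pic}(X_{K^{\sep}})^{\Gal(K^{\sep}/K)}$ does not follow from Lemma~\ref{lemma:K^sep} alone: knowing that the $(-1)$-curves live over $K^{\sep}$ only gives $\operatorname{Pic}(X_{\overline K})=\operatorname{Pic}(X_{K^{\sep}})$, not descent to $K$. The paper handles this with the low-degree Brauer sequence
\[
0\to\operatorname{Pic}(X)\to\operatorname{Pic}(X_{K^{\sep}})^{\Gal}\to\operatorname{Br}(X)\to\operatorname{Br}(K(X)),
\]
and the fact that a degree-$5$ del Pezzo has a $K$-point, which makes the last map injective.

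\textbf{Structural.} You organize the argument as the chain
\[
\text{$G$-minimal}\ \Longleftrightarrow\ \text{no $\Delta$-invariant anticlique in $\Gamma$}\ \Longleftrightarrow\ \mathbb{Z}/5\mathbb{Z}\subset\Delta,
\]
and you correctly flag the implication ``$\operatorname{rk}\operatorname{Pic}^{\Delta}\ge 2\Rightarrow$ anticlique exists'' as the hard step. But note that Lemma~\ref{lemma:no G-inv vert} is stated only as a one-way implication, so your second ``$\Longleftrightarrow$'' is not literally that lemma; and more to the point, the paper does not attempt the hard step at all. Instead it runs the cycle
\[
\text{$G$-minimal}\ \Longrightarrow\ \text{no $\Delta$-invariant anticlique}\ \Longrightarrow\ \mathbb{Z}/5\mathbb{Z}\subset\Delta\ \Longrightarrow\ \text{$G$-minimal},
\]
where the first arrow is the easy ``sum of curves'' contrapositive, the second is Lemma~\ref{lemma:no G-inv vert}, and the third is a one-line representation-theoretic observation: $\operatorname{Pic}(X_{K^{\sep}})\otimes\mathbb{Q}$ is a $5$-dimensional $\mathbb{Q}$-representation of $\mathbb{Z}/5\mathbb{Z}$ containing $\mathbb{Q}\cdot K_X$ as a trivial summand, and since the only irreducible $\mathbb{Q}[\mathbb{Z}/5\mathbb{Z}]$-modules have dimensions $1$ and $4$, the invariants are exactly one-dimensional (the action is not trivial because the $(-1)$-curves are permuted nontrivially). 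This sidesteps your ``main obstacle'' entirely---no root-lattice analysis or case check over subgroups of $\mathfrak{S}_5$ is needed.
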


\begin{proof}
Consider the exact sequence of groups \hbox{(see~\cite[Exercise 3.3.5(iii)]{G-S})}: $$0 \xrightarrow{} \text{Pic}(X) \xrightarrow{} \text{Pic}(X_{K^{\sep}})^{\Gal(K^{\sep}/K)} \xrightarrow{} \text{Br}(X) \xrightarrow{} \text{Br}(K(X)).$$ Now, since there is a $K$-point on $X$ (see, for example,~\cite{S2}), then the last homomorphism is an embedding, hence $$\text{Pic}(X)\simeq\text{Pic}(X_{K^{\sep}})^{\Gal(K^{\sep}/K)} = \text{Pic}(X_{K^{\sep}})^{h(\Gal(K^{\sep}/K))}.$$ Passing to $G$-invariants, we obtain an isomorphism: $$\text{Pic}(X)^G \simeq \text{Pic}(X_{K^{\sep}})^{\Delta}.$$

If $X$ is $G$-minimal, then by Lemma~\ref{lemma:no G-inv vert} the subgroup $\Delta$ contains a group isomorphic to $\mathbb{Z}/5\mathbb{Z}$.

Conversely, if the subgroup $\Delta$ contains a group $\mathbb{Z}/5\mathbb{Z}$, then $\text{rk}(\text{Pic}(X_{K^{\sep}})^{\Delta}) = 1$, because a five-dimensional representation of the group $\mathbb{Z}/5\mathbb{Z}$ over $\mathbb{Q}$, which has a one-dimensional trivial subrepresentation, is either trivial itself (this is not our case), or contains exactly one one-dimensional trivial subrepresentation. Therefore, $X$ is $G${\Russian"~}minimal.\end{proof}

From Lemma~\ref{lemma:centralaizer} and Lemma~\ref{lemma:lemma} we get a corollary.

\begin{sled}
The automorphism group of the minimal del Pezzo surface of degree 5 is either trivial or isomorphic to $\mathbb{Z}/5\mathbb{Z}$.
\end{sled}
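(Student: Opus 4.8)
The plan is to combine the two preceding lemmas with the elementary fact that the centralizer of a $5$-cycle in $\mathfrak{S}_5$ is the cyclic group it generates. Let $X$ be a minimal del Pezzo surface of degree $5$ over a field $K$; here ``minimal'' means $\mathrm{rk}\,\mathrm{Pic}(X)=1$, which is exactly $G$-minimality for the trivial subgroup $G=\{e\}\subset\mathrm{Aut}(X)$. In that case the group $\Delta$ of Lemma~\ref{lemma:lemma} coincides with $h(\Gal(K^{\sep}/K))$, so applying Lemma~\ref{lemma:lemma} with $G=\{e\}$ gives that $h(\Gal(K^{\sep}/K))$ contains a subgroup isomorphic to $\mathbb{Z}/5\mathbb{Z}$, i.e.\ it contains a $5$-cycle $\tau\in\mathfrak{S}_5$.

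Next I would invoke Lemma~\ref{lemma:centralaizer}, which yields the inclusion $\psi(\mathrm{Aut}(X))\subset C_{\mathfrak{S}_5}\bigl(h(\Gal(K^{\sep}/K))\bigr)$. Since $\tau\in h(\Gal(K^{\sep}/K))$, this centralizer is contained in $C_{\mathfrak{S}_5}(\langle\tau\rangle)=C_{\mathfrak{S}_5}(\tau)$. The centralizer of a $5$-cycle in $\mathfrak{S}_5$ has order $5$ (by the standard formula $\prod_k k^{a_k}a_k!$ for the order of a centralizer of a permutation of given cycle type, this is $5^1\cdot 1!=5$), and it obviously contains $\langle\tau\rangle$, which already has order $5$; hence $C_{\mathfrak{S}_5}(\tau)=\langle\tau\rangle\simeq\mathbb{Z}/5\mathbb{Z}$. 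Therefore $\psi(\mathrm{Aut}(X))$ is a subgroup of a group of order $5$, so it is either trivial or all of $\mathbb{Z}/5\mathbb{Z}$.

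Finally, since $\psi$ is injective (Corollary~\ref{sled:S_5}), we conclude $\mathrm{Aut}(X)\simeq\psi(\mathrm{Aut}(X))$ is either trivial or isomorphic to $\mathbb{Z}/5\mathbb{Z}$, as claimed. I do not expect any genuine obstacle: the only non-formal ingredient is the order-$5$ computation for the centralizer of a $5$-cycle, which is routine; alternatively one can read it off directly from the list of centralizers computed in Corollary~\ref{sled:list of auts}, noting that the only types $[H]$ with $H\supset\mathbb{Z}/5\mathbb{Z}$ are $[\mathbb{Z}/5\mathbb{Z}]$, $[\mathrm{D}_5]$, $[\mathrm{GA}(1,5)]$, $[\mathfrak{A}_5]$, $[\mathfrak{S}_5]$, whose automorphism groups are $\mathbb{Z}/5\mathbb{Z}$, $e$, $e$, $e$, $e$ respectively.
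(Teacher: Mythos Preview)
Your proposal is correct and follows exactly the route the paper indicates: apply Lemma~\ref{lemma:lemma} with $G=\{e\}$ to force a $5$-cycle into $h(\Gal(K^{\sep}/K))$, then use Lemma~\ref{lemma:centralaizer} (together with the injectivity of $\psi$ from Corollary~\ref{sled:S_5}) to trap $\mathrm{Aut}(X)$ inside $C_{\mathfrak{S}_5}(\mathbb{Z}/5\mathbb{Z})\simeq\mathbb{Z}/5\mathbb{Z}$. The paper itself offers no written-out proof beyond citing these two lemmas, so your argument is precisely the intended one with the details filled in.
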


Now let's prove Proposition~\ref{predl:G-min}.

\begin{proof}[of Proposition~\ref{predl:G-min}]
Let's prove the necessity. Suppose that there exists a $G${\Russian"~}minimal del Pezzo surface of degree $5$ over a field $K$. From Lemma~\ref{lemma:lemma} we know that the group $\Delta$ generated by the group $G$ and the image of the group $\Gal(K^{\sep}/K)$ under the homomorphism $h$, contains a subgroup isomorphic to~\mbox{$\mathbb{Z}/5\mathbb{Z}$}. Since the subgroups $G\subset\mathfrak{S}_5$ and $h(\Gal(K^{\sep}))\subset\mathfrak{S}_5$ commute, then there is a surjective homomorphism $$p\colon G\times h(\Gal(K^{\sep})) \twoheadrightarrow \Delta.$$

Therefore, either $G$ contains a subgroup isomorphic to $\mathbb{Z}/5\mathbb{Z}$, or $h(\Gal(K^{\sep}))$ contains a subgroup isomorphic to $\mathbb{Z}/5\mathbb{Z}$. In the first case, condition 1 of Proposition~\ref{predl:G-min} holds. In the second case, by Theorem~\ref{theo:theo} condition 2 holds. Moreover, by Theorem~\ref{theo:autgrp}, subgroup $G$ is contained in the centralizer $C_{\mathfrak{S}_5}(h(\Gal(K^{\sep})))$, which contains in the centralizer $C_{\mathfrak{S}_5}(\mathbb{Z}/5\mathbb{Z}) \simeq \mathbb{Z}/5\mathbb{Z}$. If condition 1 does not hold, that is, $G$ does not contain $\mathbb{Z}/5\mathbb{Z}$, then $G$ is trivial.

Now let's prove the sufficiency. Suppose condition 1 holds. Then the del Pezzo surface of degree 5 of type $[e]$ will be $G$-minimal, and by Theorem~\ref{theo:theo} surface of type $[e]$ exists over any field ($G$-minimality follows from Lemma~\ref{lemma:lemma}). Assume that condition 2 holds. Then by Theorem~\ref{theo:theo} there exists a del Pezzo surface of degree $5$ of type $[\text{Gal}(L/K)]$ over~$K$. By Lemma~\ref{lemma:lemma}, the surface $X$ will be $G$-minimal with respect to the action of the trivial group $G$.\end{proof}

\section{Classification of del Pezzo surfaces of degree~6}\label{dP6}

Consider an arbitrary field $K$. Let $X$ be a del Pezzo surface of degree 6 over $K$. The Galois group $\text{Gal} ( K^{\sep}/K )$ acts on the graph of $(-1)$-curves by automorphisms. Since the automorphism group of the graph of $(-1)$-curves of $X_{K^{\sep}}$ is isomorphic to the group $\mathfrak{S}_3\times\mathbb{Z}/2\mathbb{Z}$, see~\mbox{\cite[Theorem~8.4.2]{D}}, we get the homomorphism $$h\colon\text{Gal}(K^{\sep}/K) \xrightarrow[]{}\mathfrak{S}_3\times\mathbb{Z}/2\mathbb{Z}.$$

Let $H$ be a subgroup in $\mathfrak{S}_3\times\mathbb{Z}/2\mathbb{Z}$, denote by $[H]$ its conjugacy class.

\begin{definition}\label{def:type6} We say that a del Pezzo surface of degree $6$ has type $[H]$ if  $\Im{h} \in [H]$.
\end{definition}

It is easy to see that there are only ten types of del Pezzo surfaces of degree 6, namely: $[e]$, $[\langle((1,2),0)\rangle]$, $[\langle((1,2),1)\rangle]$, $[\langle(\text{id},1)\rangle]$, \hbox{$[\mathbb{Z}/2\mathbb{Z} \times\mathbb{Z}/2\mathbb{Z}]$}, $[\mathbb{Z}/3\mathbb{Z}]$, $[\mathbb{Z}/6\mathbb{Z}]$, $[\langle((123),0), ((12),0)\rangle]$, $[\langle((123),0), ((12),1)\rangle]$, \hbox{$[\mathfrak{S}_3\times\mathbb{Z}/2\mathbb{Z}]$}, cf.~\cite[Figure 1]{S-Z}.

\begin{proof}[of Theorem~\ref{theo:theo2}]
The proof of the necessity of condition is absolutely analogous to the proof of the necessity of condition of Theorem~\ref{theo:theo}. Therefore, we immediately move on to sufficiency.

Suppose we have a subgroup $H \subset \mathfrak{S}_3\times\mathbb{Z}/2\mathbb{Z}$ and also we know that the field $K$ has a Galois extension $L\supset K$ with Galois group isomorphic to $H$. We want to construct a del Pezzo surface of degree $6$ of type $[H]$. To do this, we first construct a del Pezzo surface of degree 5 of the corresponding type. 

Consider an embedding $$i\colon\mathfrak{S}_3\times\mathbb{Z}/2\mathbb{Z}\hookrightarrow \mathfrak{S}_5.$$
Denote by $G$ the image of this homomorphism, put $H' = i(H)$ and construct a del Pezzo surface of degree 5 of type $[H']$. The existence of such a surface is guaranteed by Theorem~\ref{theo:theo}. Since $H'$ is a subgroup in $G$, then by Lemma~\ref{lemma:dP5->dP6} there is a vertex $v$ of the graph $\Gamma$ that is invariant with respect to the action of the group $H'$. So $(-1)${\Russian"~}curve corresponding to the vertex $v$ is invariant with respect to the action of the group $\text{Gal} (K^{\sep}/K)$, therefore, defined over $K$. Blowing down this $(-1)$-curve, we get a del Pezzo surface of degree $6$ of type~$[H]$.\end{proof}

The classification of rational del Pezzo surfaces of degree $6$ (in particular, $G$-minimal ones) over perfect fields and information about their automorphism groups can be found in \cite[Section 4]{S-Z}.

\end{document}